\newcommand{\vc}{{\bf c}}
\newcommand{\vx}{{\bf x}}
\newcommand{\vy}{{\bf y}}
\newcommand{\vz}{{\bf z}}
\newcommand{\R}{\mathbb{R}}
\newcommand{\rank}{\operatorname{rank}}
\newcommand{\A}{\mathcal{A}}
\newcommand{\1}{\mathbf{1}}
\newtheorem{Thm}{Theorem}[section]
\newtheorem{Def}[Thm]{Definition}
\newtheorem{Cor}[Thm]{Corollary}
\newtheorem{remark}[Thm]{Remark}
\begin{document}
	
\title{Sum of Squares Decompositions and Rank Bounds for Biquadratic Forms}
	\Large
	\author{Liqun Qi\footnote{
			Department of Applied Mathematics, The Hong Kong Polytechnic University, Hung Hom, Kowloon, Hong Kong.
			({\tt maqilq@polyu.edu.hk})}
		\and
		Chunfeng Cui\footnote{School of Mathematical Sciences, Beihang University, Beijing  100191, China.
			{\tt chunfengcui@buaa.edu.cn})}
		\and {and \
			Yi Xu\footnote{School of Mathematics, Southeast University, Nanjing  211189, China. Nanjing Center for Applied Mathematics, Nanjing 211135,  China. Jiangsu Provincial Scientific Research Center of Applied Mathematics, Nanjing 211189, China. ({\tt yi.xu1983@hotmail.com})}
		}
	}

	\date{\today}
	\maketitle
	
	\begin{abstract}
		We study positive semi-definite (PSD) biquadratic forms and their sum-of-squares (SOS) representations.
		For the class of partially symmetric biquadratic forms, we establish necessary and sufficient conditions for positive semi-definiteness and prove that every PSD partially symmetric biquadratic form is a sum of squares of bilinear forms.
		This extends the known result for fully symmetric biquadratic forms.   
		We describe an efficient computational procedure for constructing SOS decompositions, exploiting the Kronecker-product structure of the associated matrix representation.    {We introduce simple biquadratic forms.  For $m \ge 2$, 
		we present a $m \times 2$ PSD biquadratic form and show that it can be expressed as the sum of $m+1$ squares, but cannot be expressed as the sum of $m$ squares.}  This {provides a lower bound for sos rank of $m \times 2$ biquadratic forms, and} shows that previously proved results that a $2 \times 2$ PSD biquadratic form can be expressed as the sum of three squares, and a $3 \times 2$ PSD biquadratic form can be expressed as the sum of four squares, are tight.   {We also present an $3 \times 3$ SOS biquadratic form, which can be expressed as the sum of six squares, but not the sum  of five squares.}
       Moreover, we establish a universal upper bound $\operatorname{SOS\text{-}rank}(P) \le mn-1$ for any SOS biquadratic form, which improves the trivial bound $mn$.
		
		\medskip
		
		\textbf{Keywords.} Biquadratic forms, sum-of-squares, positive semi-definiteness, sos rank, M-eigenvalues, partially symmetric biquadratic forms.
		
		\medskip
		\textbf{AMS subject classifications.} 11E25, 12D15, 14P10, 15A69, 90C23.
	\end{abstract}

	\section{Introduction}
	
	Denote $\{1, \ldots, m\}$ as $[m]$.   Let $\A = (a_{ijkl})$, where $a_{ijkl} \in \R$ for $i, k \in [m], j, l \in [n]$. Then $\A$ is called an $m \times n$ {\bf biquadratic tensor}. Biquadratic tensors arise in solid mechanics \cite{KS76, ZR16}, statistics \cite{CHHS25},   spectral graph theory \cite{YYLDZY16}, and polynomial theory \cite{CQX25}.   If
	\begin{equation} \label{e1}
		a_{ijkl} = a_{kjil} = a_{klij}
	\end{equation}
	for $i, k \in [m], j, l \in [n]$, then $\A$ is called a {\bf symmetric biquadratic tensor}.
	Denote the set of all $m \times n$ biquadratic tensors by $BQ(m, n)$, and the set of all $m \times n$ symmetric biquadratic tensors by $SBQ(m, n)$.
	
	With $\A = (a_{ijkl}) \in BQ(m, n)$, we may define a homogeneous polynomial $P$ by
	\[
	P(\mathbf{x}, \mathbf{y}) = \sum_{i,k=1}^{m} \sum_{j,l=1}^{n} a_{ijkl} x_i x_k y_j y_l.
	\]
	We call $P$ a {\bf biquadratic form}.  A PSD (positive semi-definite) biquadratic form is one for which $P(\mathbf{x}, \mathbf{y}) \geq 0$ for all $\mathbf{x}, \mathbf{y}$. It is SOS (sum of squares) if it can be written as a finite sum of squares of bilinear forms.   If the biquadratic form $P$ is PSD or SOS respectively, then the biquadratic tensor $\A$ is also said to be PSD or SOS respectively.    Note that while a biquadratic tensor uniquely defines a biquadratic form, a biquadratic form corresponds to infinitely many biquadratic tensors, but uniquely corresponds to one symmetric biquadratic tensor.
	
	A PSD polynomial is nonnegative everywhere.   A very important problem in algebra and optimization is to identify structured PSD tensors and forms.   As Hilbert \cite{Hi88} proved in 1888, a PSD form may not be SOS in general.  On the other hand,
	the existence of an SOS decomposition makes the problem of verifying nonnegativity computationally tractable via semidefinite programming.    Thus, it is also very important to identify whether a given structured PSD tensor or form class is SOS or not.
	
	\bigskip
	
	Biquadratic forms arise naturally in several areas of applied mathematics and engineering \cite{QC25}.
	In elasticity theory, the strong ellipticity condition for isotropic materials is governed by a biquadratic form whose positivity guarantees wave propagation.
	In statistics and machine learning, biquadratic kernels appear in higher-order learning models and tensor-based data analysis.
	Moreover, verifying nonnegativity of biquadratic forms is a fundamental subproblem in polynomial optimization and semidefinite programming relaxations \cite{BPT12, Par03}.
	Thus, understanding when a positive semidefinite biquadratic form admits an SOS decomposition not only advances classical real algebraic geometry, but also provides practical tools for certifying positivity in these applications.
	
	Very recently, several structured PSD biquadratic tensor classes are identified.   In \cite{QC25}, three PSD biquadratic tensor classes, diagonally dominated symmetric biquadratic tensors, symmetric M-biquadratic tensors and  symmetric $\mathrm{B}_{0}$-biquadratic tensors were identified.   In \cite{QCCX25}, completely positive biquadratic tensors were introduced.    All completely positive biquadratic tensors are SOS.   Two subclasses of completely positive biquadratic tensors, biquadratic Cauchy tensors and biquadratic Pascal tensors, were identified there.   In \cite{XCQ25}, it was shown that
	diagonally dominated symmetric biquadratic tensors are SOS tensors.
	
	A PSD biquadratic form may not be SOS.  In 1975, Choi \cite{Ch75} presented a $3 \times 3$ PSD biquadratic form which is not SOS.  In \cite{XCQ25}, it was shown that such a problem is caused by non-symmetry.  {\bf Symmetric biquadratic forms} were introduced in \cite{XCQ25}, and it was shown that all PSD symmetric biquadratic forms are SOS, no matter how big $m$ and $n$ are.   A biquadratic form is said to be symmetric if it remains invariant under permutations of  the indices in $x$-variables and $y$-variables respectively.
	
	In this paper, we consider the intermediate class of partially symmetric biquadratic forms, which are symmetric only either in the $\vx$ variables or the $\vy$ variables.
	We establish necessary and sufficient conditions for positive semi-definiteness of monic  partially symmetric biquadratic forms and, more importantly, prove that every PSD partially symmetric biquadratic form is SOS.
	This resolves an open question about the SOS property under partial symmetry and extends the known result for fully symmetric forms.
	
	Furthermore, we describe an efficient computational procedure for constructing SOS decompositions, exploiting the Kronecker-product structure of the associated matrix representation.
	
	{We introduce simple biquadratic forms.  For $m \ge 2$,
		we present a $m \times 2$ PSD biquadratic form and show that it can be expressed as the sum of $m+1$ squares, but cannot be expressed as the sum of $m$ squares.  This provides a lower bound for sos rank of $m \times 2$ biquadratic forms, and shows that previously proved results that a $2 \times 2$ PSD biquadratic form can be expressed as the sum of three squares, and a $3 \times 2$ PSD biquadratic form can be expressed as the sum of four squares, are tight.  We also present an $3 \times 3$ SOS biquadratic form, which can be expressed as the sum of six squares, but not the sum  of five squares.}

   In addition, we derive a universal upper bound $\operatorname{SOS\text{-}rank}(P) \le mn-1$ for any SOS biquadratic form, which improves the {trivial bound $mn$}.
	
	\section{Monic Partially Symmetric Biquadratic Forms}
	\label{sec:x_symmetric_forms}

		Let $\A = (a_{ijkl}) \in SBQ(m , n)$, $\vx \in \R^m$ and $\vy \in \R^n$. Then $\A \cdot \vy \vx \vy \in \R^m$ and $(\A \cdot \vy \vx \vy)_i = \sum_{k=1}^m \sum_{j,l=1}^n a_{ijkl} y_j x_k y_l$ for $i \in [m]$, and $\A \vx \cdot \vx \vy \in \R^n$ with $(\A \vx \cdot \vx \vy)_j = \sum_{i,k=1}^m \sum_{l=1}^n a_{ijkl} x_i x_k y_l$ for $j \in [n]$. If there are $\lambda \in \R$, $\vx \in \R^m$, $\|\vx\|_2 = 1$, $\vy \in \R^n$, $\|\vy\|_2 = 1$, such that
	\[
	\A \cdot \vy \vx \vy = \lambda \vx, \quad \A \vx \cdot \vx \vy = \lambda \vy,
	\]
	then $\lambda$ is called an M-eigenvalue of $\A$, with $\vx$ and $\vy$ as its M-eigenvectors \cite{QDH09}.	For other works on biquadratic forms and biquadratic tensors, see \cite{Ca73, CQX25, DLQY20, WSL20, Zh23}

	Suppose that $\A = (a_{ijkl})$, where $a_{ijkl} \in \R$ for $i, k \in [m], j, l \in [n]$, is an $m \times n$ biquadratic tensor. Let
	\begin{equation} \label{bqform}
		P(\vx, \vy) = \sum_{i,k=1}^m \sum_{j,l=1}^n a_{ijkl}x_i y_j x_k y_l,
	\end{equation}
	where $\vx \in \R^m$ and $\vy \in \R^n$. Then $P$ is called a \textbf{biquadratic form}. If $P(\vx, \vy) \ge 0$ for all $\vx \in \R^m$ and $\vy \in \R^n$, then $P$ is called \textbf{positive semi-definite (PSD)}. If
	\[
	P(\vx, \vy) = \sum_{p=1}^r f_p(\vx, \vy)^2,
	\]
	where $f_p$ for $p \in [r]$ are bilinear forms, then we say that $P$ is \textbf{sum-of-squares (SOS)}. The smallest $r$ is called the \textbf{SOS rank} of $P$. Clearly, $P$ is PSD or SOS if and only if $\A$ is PSD or SOS, respectively.
	
	While a biquadratic form $P$ may be constructed from a biquadratic tensor $\A$, the tensor $\A$ is not unique to $P$. However, there is a unique \emph{symmetric} biquadratic tensor $\A$ associated with $P$.

    The following definition extends the definition of symmetric forms \cite{GKR17}.

	\begin{Def}
		Suppose $P(\vx, \vy) = P(x_1, \ldots, x_m, y_1, \ldots, y_n)$. If
		\[
		P(x_1, \ldots, x_m, y_1, \ldots, y_n) = P(x_{\sigma(1)}, \ldots, x_{\sigma(m)}, y_1, \ldots, y_n)
		\]
		for any permutation $\sigma$, then $P$ is called \textbf{$x$-symmetric}. Similarly, we define \textbf{$y$-symmetric biquadratic forms}. If $P$ is both $x$-symmetric and $y$-symmetric, we call $P$ a \textbf{symmetric biquadratic form}.  If $P$ is either $x$-symmetric or $y$-symmetric, we call $P$ a \textbf{partially symmetric biquadratic form}.
	\end{Def}
	
	Since it was proved in \cite{XCQ25} that all PSD symmetric biquadratic forms are SOS, the question is now: whether a given PSD $x$-symmetric biquadratic form is SOS or not.
	
	Note that an $x$-symmetric $m \times n$ biquadratic form $P$ has  $2n^2$ free coefficients. If we fix the diagonal coefficients as $1$, we may write such a form as
	\begin{align}
		\nonumber &P(\vx, \vy)\\
		= &\sum_{i=1}^m \sum_{j=1}^n x_i^2 y_j^2
		+ \sum_{i \neq k} \sum_{j, l=1}^n a_{jl} x_i x_k y_jy_l  + \sum_{i=1}^m \sum_{j \neq l} b_{jl}x_i^2 y_j y_l \label{equ:monic_sbq}\\
		\nonumber =&{(\vx^\top\vx)(\vy^\top\vy) + \left((\1_m^\top\vx)^2 - \vx^\top\vx \right)\left(\vy^\top A \vy\right) + \left(\vx^\top\vx\right)\left(\vy^\top B \vy\right),}
	\end{align}
	where $A=(a_{jl})$ and $B=(b_{jl})$ are $n\times n$ symmetric matrices, and $B$ has zero diagonal (i.e., $b_{jj}=0$ for all $j$). We call such a form a \textbf{monic $x$-symmetric biquadratic form}.

	\begin{remark}
		{Similarly, we can define monic $y$-symmetric $m \times n$ biquadratic form  as follows
			\begin{align*}
				&P(\vx, \vy)\\
				= &\sum_{i=1}^m \sum_{j=1}^n x_i^2 y_j^2
				+ \sum_{i, k=1}^m\sum_{j \neq l}  a_{ik} x_i x_k y_jy_l  +  \sum_{i \neq k}\sum_{j=1}^n b_{ik}x_ix_k y_j^2 \\
				=&(\vx^\top\vx)(\vy^\top\vy) + \left(\vx^\top A \vx\right)\left((\1_n^\top\vy)^2 - \vy^\top\vy \right) + \left(\vx^\top B\vx\right)\left(\vy^\top  \vy\right), 
			\end{align*}
			where $A=(a_{ik})$ and $B=(b_{ik})$ are $m\times m$ symmetric matrices, and $B$ has zero diagonal (i.e., $b_{i}=0$ for all $i$).	Our results on $x$-symmetric in this paper may be naturally extended to $y$-symmetric biquadratic forms.}
	\end{remark}

	\subsection{Positive Semi-definiteness of Monic $x$-Symmetric Forms}
	
	\begin{Thm}\label{thm:psd_xsym}
		Let $P$ be an $m \times n$ monic $x$-symmetric biquadratic form as in \eqref{equ:monic_sbq}, and let $\A\in SBQ(m,n)$ be the corresponding symmetric biquadratic tensor.
		Let $A=(a_{jl})$ and $B=(b_{jl})$ be the $n\times n$ symmetric matrices of coefficients.
		Then $P$ is PSD if and only if the following two matrix inequalities hold:
		\begin{equation}\label{Thm2.2_inequlaity}
			I + B - A \succeq 0, \qquad I + B + (m-1)A \succeq 0,
		\end{equation}
		where $I$ denotes the $n \times n$ identity matrix.
	\end{Thm}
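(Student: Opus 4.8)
The plan is to reduce the biquadratic inequality $P(\vx,\vy)\ge 0$ to a statement purely about quadratic forms in $\vy$, by exploiting the fact that $P$ depends on $\vx$ only through the two scalars $s := \vx^\top\vx$ and $t := (\1_m^\top\vx)^2$. Substituting these into \eqref{equ:monic_sbq} and grouping terms gives
\[
P(\vx,\vy) = s\,\vy^\top(I+B-A)\vy + t\,\vy^\top A\vy,
\]
so that, for each fixed $\vy$, $P$ is a \emph{linear} functional of the pair $(s,t)$. This separation of the two variable blocks is the key structural simplification, after which the problem becomes one of nonnegativity of a linear functional over the set of attainable $(s,t)$.

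Next I would determine exactly which pairs $(s,t)$ are realizable as $\vx$ ranges over $\R^m$. By Cauchy--Schwarz, $(\1_m^\top\vx)^2 \le m\,\vx^\top\vx$, so one always has $0 \le t \le ms$ with $s\ge 0$; conversely, choosing $\vx$ parallel to $\1_m$ attains $t = ms$, while (for $m \ge 2$) choosing $\vx \perp \1_m$ attains $t = 0$, and homogeneity in $\vx$ lets us scale $s$ freely. Hence the realizable set is precisely the planar cone $C = \{(s,t) : 0 \le t \le ms\}$, whose extreme rays are generated by $(1,0)$ and $(1,m)$.

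Finally I would read off the theorem from nonnegativity of the linear functional on $C$. For necessity, taking $\vx\perp\1_m$ (so $t=0$, $s>0$) forces $\vy^\top(I+B-A)\vy \ge 0$ for all $\vy$, i.e.\ $I+B-A\succeq 0$; taking $\vx = \1_m$ (so $s=m$, $t=m^2$) forces $\vy^\top(I+B+(m-1)A)\vy \ge 0$, i.e.\ $I+B+(m-1)A\succeq 0$. For sufficiency, any realizable $(s,t)$ decomposes as $(s,t) = (s - t/m)(1,0) + (t/m)(1,m)$ with both coefficients nonnegative, whence
\[
P(\vx,\vy) = \left(s-\tfrac{t}{m}\right)\vy^\top(I+B-A)\vy + \tfrac{t}{m}\,\vy^\top\big(I+B+(m-1)A\big)\vy \ge 0
\]
as soon as both matrices are PSD. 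The only delicate point is the second paragraph: one must verify that the full cone $C$, and in particular its boundary ray $t=0$, is genuinely attained, which is where the hypothesis $m\ge 2$ is used (for $m=1$ one has $t=s$ identically, the ray $(1,0)$ disappears, and only the single condition $I+B\succeq 0$ survives). Everything else is routine linear algebra.
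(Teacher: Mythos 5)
Your proof is correct, and it takes a genuinely different route from the paper's. The paper proves this theorem through M-eigenvalue analysis: it writes down the M-eigenvector equations for the associated symmetric tensor $\A$, splits into the cases ($\vx^\top\1_m=0$ or $A\vy=0$) versus ($\vx^\top\1_m\neq 0$ and $A\vy\neq 0$), identifies the eigenvalues arising in each case with those of $I+B-A$ and $I+B+(m-1)A$ respectively, and then needs an extra patching step with an orthonormal eigenbasis of $I+B+(m-1)A$, because its second case only controls $\vy^\top(I+B+(m-1)A)\vy$ on vectors $\vy$ with $A\vy\neq 0$. You bypass all of that machinery: your observation that $P$ depends on $\vx$ only through $(s,t)=\bigl(\vx^\top\vx,\,(\1_m^\top\vx)^2\bigr)$, that the attainable pairs form exactly the cone $\{0\le t\le ms\}$ when $m\ge 2$, and that $P$ is linear in $(s,t)$ for fixed $\vy$, reduces the theorem to evaluating the two extreme rays (necessity) and taking a conic combination (sufficiency). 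Notably, your starting identity $P(\vx,\vy)=(\vx^\top\vx)\,\vy^\top(I+B-A)\vy+(\1_m^\top\vx)^2\,\vy^\top A\vy$ does appear in the paper, but only in Section~\ref{sec:computation} as the launch point for the computational procedure, not in the proof of this theorem. Your route is shorter, more elementary, and avoids the delicate case-patching; it also isolates exactly where $m\ge 2$ is used (for $m=1$ the ray $t=0$ disappears, $A$ drops out of $P$ entirely, and only $I+B\succeq 0$ survives), a hypothesis the paper's statement leaves implicit. What the paper's approach buys in exchange is that the M-eigenvalue case analysis previews the spectral block structure $\operatorname{diag}(R,Q,\dots,Q)$ exploited later for the SOS decomposition and rank bound, so it integrates more tightly with the rest of the paper's development.
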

	
	\begin{proof}
		$P$ is positive semi-definite if and only if all M-eigenvalues are nonnegative.
		A constant $\lambda$ is an M-eigenvalue of $\A$ if there exists  unit vectors   $\vx$ and $\vy$  such that
		\begin{subequations}\label{eq:Meig_xsym}
			\begin{align}
				\bigl[\bigl(\vy^\top(I+B-A)\vy\bigr)I_m
				+ \bigl(\vy^\top A \vy\bigr)\mathbf{1}_m\mathbf{1}_m^\top\bigr]\vx & =  \lambda\vx, \label{eq:sub1} \\
				\bigl[(I+B-A)+(\vx^\top\mathbf{1}_m)^2A\bigr]\vy &= \lambda\vy.\label{eq:sub2}
			\end{align}
		\end{subequations}
		{Here, the second equality follows from $\vx^\top\vx=1$.}
		
		Equation \eqref{eq:sub1} inspires us to consider the following two cases with respect to the eigenvectors $\vx$ and $\vy$: 
		
		Case (i) $\vx^T\mathbf{1}_m=0$ or  $A\vy=0$. In this case, \eqref{eq:Meig_xsym} reduces to
		\[	\bigl(\vy^\top(I+B-A)\vy\bigr)\vx  =  \lambda_1\vx   \text{ and }
		(I+B-A)\vy= \lambda_1\vy.\]
		Consequently, 		$\lambda_1$ is an  M-eigenvalue of $\A$ if  and only if
		\[(I+B-A)\vy = \lambda_1\vy \text{ and } \lambda_1=\vy^\top(I+B-A)\vy.\]
		Thus, all such M-eigenvalues $\lambda_1$ are nonnegative if and only if $I+B-A\ge 0$.

		Case (ii) $\vx^T\mathbf{1}_m\neq0$ and   $A\vy\neq 0$.
		Equation \eqref{eq:sub1} implies that $\vx$ is parallel to $\mathbf{1}_m$. Together with the condition that $\vx$ is a unit vector, this forces $\vx = \frac{1}{\sqrt{m}}\mathbf{1}_m$. Substituting this expression for $\vx$ into \eqref{eq:Meig_xsym} then yields
		\[(I+B+(m-1)A)\vy = \lambda\vy \text{ and } \lambda_2=\vy^\top(I+B+(m-1)A)\vy.\]
		Consequently, all  M-eigenvalues $\lambda_2$ are  nonnegative if and only if $\vy^\top(I+B+(m-1)A)\vy\ge 0$ for any {$A \vy\neq  0.$}
		
		Suppose that $P$ is PSD, then it follows from Case (i) that $I+B-A\ge 0$. Now, let $\mathbf{y}_i$ be an orthonormal eigenbasis of $I + B + (m-1)A$. For each eigenvector $\mathbf{y}_i$, we distinguish two cases:
		If $A\vy_i\neq 0$, Case (ii) directly gives  $\vy_i^\top(I+B+(m-1)A)\vy_i\ge 0$.
		Otherwise, if $A\vy_i= 0$, then  $$\vy_i^\top(I+B+(m-1)A)\vy_i = \vy_i^\top(I+B-A)\vy_i\ge 0.$$
		where the inequality follows from $I+B-A\ge 0$.  Since $\{\vy_i\}$ is an orthonormal basis of $\mathbb R^n$, it follows that    $I+B+(m-1)A\ge 0$.
		
		On the other hand, suppose that \eqref{Thm2.2_inequlaity} holds, the M-eigenvalues in Cases (i) and (ii) are  both  nonnegative, and hence $P$ is PSD.  This completes the proof.
	\end{proof}

	\begin{remark}
		When $P$ is also $y$-symmetric, the matrices $A$ and $B$ reduce to
		\[
		A=(a-c)I_n+c\mathbf{1}_n\mathbf{1}_n^\top,\qquad
		B=b(\mathbf{1}_n\mathbf{1}_n^\top-I_n),
		\]
		{for three constants $a,b,c\in\mathbb R$. Then,}  the two matrix inequalities in \eqref{Thm2.2_inequlaity} reproduce the four linear inequalities (i)--(iv) of Theorem~4.2 in \cite{XCQ25}.
		Thus Theorem~\ref{thm:psd_xsym} extends the earlier result to the more general $x$-symmetric setting.
	\end{remark}
	
	\subsection{SOS Property of Monic $x$-Symmetric Forms}
	
	We now prove that every PSD $x$-symmetric biquadratic form is SOS.
	This resolves the open question raised in the introduction.
	
	\begin{Thm}\label{thm:sos_xsym}
		Let $P$ be an $m \times n$ monic $x$-symmetric biquadratic form as in \eqref{equ:monic_sbq}.
		If $P$ is positive semi-definite, then $P$ is a sum of squares of bilinear forms.
	\end{Thm}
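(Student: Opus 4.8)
The plan is to reduce everything to the two symmetric matrices $M := I + B - A$ and $N := I + B + (m-1)A$, which are \emph{both} positive semi-definite by Theorem~\ref{thm:psd_xsym}. First I would recast \eqref{equ:monic_sbq} in the compact quadratic-in-$\vx$ form
\[
P(\vx,\vy) = \vx^\top\!\bigl[(\vy^\top M\vy)\,I_m + (\vy^\top A\vy)\,\1_m\1_m^\top\bigr]\vx,
\]
which is exactly the matrix appearing on the left-hand side of \eqref{eq:sub1}. This exhibits $P$ as $\vx^\top Q(\vy)\vx$ with $Q(\vy) = \alpha I_m + \beta\,\1_m\1_m^\top$, where $\alpha = \vy^\top M\vy$ and $\beta = \vy^\top A\vy$, so the rank-one matrix $\1_m\1_m^\top$ carries all the $x$-symmetry.

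The heart of the argument is an orthogonal splitting of the $\vx$-direction that diagonalizes this rank-one perturbation. Writing $P_\perp := I_m - \frac1m\1_m\1_m^\top$ for the orthogonal projection onto $\1_m^\perp$, so that $\vx^\top\vx = \frac1m(\1_m^\top\vx)^2 + \vx^\top P_\perp\vx$, I would substitute this into the compact form and collect the coefficient of $(\1_m^\top\vx)^2$. Using $M + mA = N$, this yields the key identity
\[
P(\vx,\vy) = (\vy^\top M\vy)\,(\vx^\top P_\perp\vx) + \frac1m\,(\1_m^\top\vx)^2\,(\vy^\top N\vy).
\]
The point of this rearrangement is that all three quadratic forms that now appear have positive semi-definite Gram matrices: $M\succeq0$ and $N\succeq0$ for the $\vy$-factors, and $P_\perp\succeq0$ (a projection) for the $\vx$-factor. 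Hence each of $\vy^\top M\vy$, $\vy^\top N\vy$, and $\vx^\top P_\perp\vx$ is a sum of squares of \emph{linear} forms.

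Finally I would assemble the decomposition. Taking factorizations $M = \sum_r \vu_r\vu_r^\top$, $N = \sum_s \vv_s\vv_s^\top$ (e.g.\ from spectral decompositions), and $P_\perp = \sum_{k=1}^{m-1}\vp_k\vp_k^\top$, every cross term is the square of a bilinear form, since $(\vu_r^\top\vy)^2(\vp_k^\top\vx)^2 = \bigl[(\vp_k^\top\vx)(\vu_r^\top\vy)\bigr]^2$ and likewise for the second sum; the factor $1/m$ is absorbed into the square. This gives the explicit SOS representation
\[
P(\vx,\vy) = \sum_{r}\sum_{k=1}^{m-1}\bigl[(\vp_k^\top\vx)(\vu_r^\top\vy)\bigr]^2 + \sum_s\Bigl[\tfrac{1}{\sqrt m}(\1_m^\top\vx)(\vv_s^\top\vy)\Bigr]^2,
\]
a finite sum of squares of bilinear forms, completing the proof.

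The routine parts — expanding the compact form, carrying out the projection split, and factoring the three PSD forms — are mechanical. The one step that requires the right idea is the orthogonal decomposition isolating the $\1_m$-direction, because it is precisely what turns the coefficient of $(\1_m^\top\vx)^2$ into $\frac1m\vy^\top N\vy$ and thereby forces \emph{both} inequalities of Theorem~\ref{thm:psd_xsym} into play simultaneously. I expect this identity to be the main obstacle; once it is in hand, the SOS conclusion is immediate, and the same scheme (splitting the $\vy$-direction instead) will handle the $y$-symmetric case noted in the remark.
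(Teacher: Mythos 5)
Your proof is correct: the compact rewriting of \eqref{equ:monic_sbq} is valid, the identity $M+mA=N$ holds since $(I+B-A)+mA=I+B+(m-1)A$, the key displayed identity follows, and since $M\succeq 0$, $N\succeq 0$ (by Theorem~\ref{thm:psd_xsym}) and $P_\perp\succeq 0$, the final assembly into squares of the rank-one bilinear forms $\vx^\top\bigl(\mathbf{p}_k\vu_r^\top\bigr)\vy$ is legitimate.

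Your route shares its core identity with the paper's but executes it differently, and the difference is worth recording. The paper (whose $Q,R$ are your $M,N$) lifts $P$ to the $mn\times mn$ Gram matrix $I_m\otimes Q+\frac1m(\1_m\1_m^\top)\otimes(R-Q)$, proves that this matrix is positive semidefinite on \emph{all} of $\R^{mn}$ --- using precisely the completing-the-square step you use, but applied to arbitrary block vectors $\vz=(\vz_1^\top,\dots,\vz_m^\top)^\top$ rather than only to $\vz=\vx\otimes\vy$ --- and then factors that matrix abstractly to extract bilinear forms. You never leave the rank-one variety: your identity
\[
P(\vx,\vy)=(\vx^\top P_\perp\vx)(\vy^\top M\vy)+\tfrac1m(\1_m^\top\vx)^2(\vy^\top N\vy)
\]
is exactly the restriction of the paper's matrix identity to $\vz=\vx\otimes\vy$, and you conclude by the elementary observation that a product of squares of linear forms in $\vx$ and in $\vy$ is a square of a bilinear form, with no Kronecker machinery at all. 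What the paper's heavier formulation buys is the positive semidefiniteness of the Gram matrix itself, which it reuses for the block-diagonalization $(U^\top\otimes I_n)\,M\,(U\otimes I_n)=\operatorname{diag}(R,Q,\dots,Q)$, for the rank bound of Theorem~\ref{thm:sos_rank_bound}, and for the SDP viewpoint of Section~\ref{sec:universal_bound}. What your version buys is a shorter, more elementary proof that immediately yields an explicit decomposition into $(m-1)\operatorname{rank}(Q)+\operatorname{rank}(R)$ squares with rank-one coefficient matrices --- precisely the bound the paper states separately as Theorem~\ref{thm:sos_rank_bound}, and the same decomposition its computational Section~\ref{sec:computation} later constructs spectrally.
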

	
	\begin{proof}
		Let $A$ and $B$ be the $n \times n$ symmetric matrices as in Theorem~\ref{thm:psd_xsym}.
		Since $P$ is PSD, Theorem~\ref{thm:psd_xsym} yields
		\[
		Q := I_n + B - A \succeq 0, \qquad R := I_n + B + (m-1)A \succeq 0.
		\]
		Let $\A$ be the unique symmetric biquadratic tensor corresponding to $P$.
		Define the $mn \times mn$ matrix $M$ by
		\[
		M_{(i,j),(k,l)} = a_{ijkl}, \quad \text{for } i,k \in [m], \; j,l \in [n],
		\]
		where $a_{ijkl}$ are the entries of $\A$.
		Then $M$ can be written in block form as
		\[
		M = I_m \otimes (I_n + B - A) + (\mathbf{1}_m \mathbf{1}_m^\top) \otimes A
		= I_m \otimes Q + \frac{1}{m} (\mathbf{1}_m \mathbf{1}_m^\top) \otimes (R - Q).
		\]
		For any vectors $\vx \in \R^m$ and $\vy \in \R^n$, let $\vz = \vx \otimes \vy$. Then
		\[
		P(\vx, \vy) = \sum_{i,k=1}^m \sum_{j,l=1}^n a_{ijkl} x_i x_k y_j y_l = \vz^\top M \vz.
		\]
		We now show that $M$ is positive semidefinite.
		Let $\vz \in \R^{mn}$ be partitioned as $\vz = (\vz_1^\top, \dots, \vz_m^\top)^\top$ with $\vz_i \in \R^n$, and set $\mathbf{s} = \sum_{i=1}^m \vz_i$. Then,
		\begin{align*}
			\vz^\top M \vz &= \sum_{i=1}^m \vz_i^\top Q \vz_i + \mathbf{s}^\top A \mathbf{s} \\
			&= \sum_{i=1}^m \vz_i^\top Q \vz_i + \frac{1}{m} \mathbf{s}^\top (R - Q) \mathbf{s} \\
			&= \sum_{i=1}^m \vz_i^\top Q \vz_i - \frac{1}{m} \mathbf{s}^\top Q \mathbf{s} + \frac{1}{m} \mathbf{s}^\top R \mathbf{s}.
		\end{align*}
		Observe that
		\[
		\sum_{i=1}^m \vz_i^\top Q \vz_i - \frac{1}{m} \mathbf{s}^\top Q \mathbf{s}
		= \sum_{i=1}^m \left( \vz_i - \frac{1}{m} \mathbf{s} \right)^\top Q \left( \vz_i - \frac{1}{m} \mathbf{s} \right).
		\]
		Hence,
		\[
		\vz^\top M \vz = \sum_{i=1}^m \left( \vz_i - \frac{1}{m} \mathbf{s} \right)^\top Q \left( \vz_i - \frac{1}{m} \mathbf{s} \right) + \frac{1}{m} \mathbf{s}^\top R \mathbf{s}.
		\]
		Since $Q \succeq 0$ and $R \succeq 0$, both terms are nonnegative for every $\vz$. Therefore $M \succeq 0$.
		
		Because $M$ is positive semidefinite, it can be decomposed as $M = \sum_{p=1}^{mn} \mathbf{w}_p \mathbf{w}_p^\top$ (for example, by its spectral decomposition). Then
		\[
		P(\vx, \vy) = \vz^\top M \vz = \sum_{p=1}^{mn} (\mathbf{w}_p^\top \vz)^2.
		\]
		Each inner product $\mathbf{w}_p^\top \vz$ is a bilinear form in $\vx$ and $\vy$, because $\vz = \vx \otimes \vy$ and $\mathbf{w}_p{\in \mathbb R^{mn}}$  corresponds to a matrix $W_p{\in \mathbb R^{m\times n}}$ such that $\mathbf{w}_p^\top \vz = \vx^\top W_p \vy$. Hence $P$ is a sum of squares of bilinear forms.
	\end{proof}
	
	\begin{remark}
		Theorem~\ref{thm:sos_xsym}, together with the result of \cite{XCQ25} for fully symmetric forms, shows that for two natural symmetry classes (fully symmetric and $x$-symmetric) every PSD biquadratic form is SOS.
		This provides a partial answer to the open problem of characterizing which structured PSD biquadratic forms are SOS.
	\end{remark}
	
	\subsection{Bounding the SOS Rank}
	
	For a PSD monic $x$-symmetric biquadratic form $P$, the constructive proof of Theorem~\ref{thm:sos_xsym} yields an explicit upper bound on the minimum number of squares required in an SOS decomposition.
	
	\begin{Thm}[SOS rank bound]\label{thm:sos_rank_bound}
		Let $P$ be an $m \times n$ monic $x$-symmetric biquadratic form as in \eqref{equ:monic_sbq} that is positive semi-definite, and let
		\[
		Q = I_n + B - A, \qquad R = I_n + B + (m-1)A,
		\]
		where $A,B$ are the symmetric coefficient matrices from \eqref{equ:monic_sbq}.
		Then $Q \succeq 0$ and $R \succeq 0$, and the SOS rank of $P$ satisfies
		\begin{equation} \label{e4}
			\operatorname{SOS\text{-}rank}(P) \; \le \; \operatorname{rank}(R) + (m-1)\cdot \operatorname{rank}(Q).
		\end{equation}
	\end{Thm}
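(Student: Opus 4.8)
The plan is to start from the explicit decomposition already established in the proof of Theorem~\ref{thm:sos_xsym} and simply keep track of how many squares each piece contributes once everything is rewritten in the original variables $\vx$ and $\vy$. Recall from that proof that with $\vz = \vx \otimes \vy$ partitioned into blocks $\vz_i \in \R^n$, one has $\vz_i = x_i \vy$ and $\mathbf{s} = \sum_{i=1}^m \vz_i = (\1_m^\top \vx)\vy$, so that $\vz_i - \tfrac1m\mathbf{s} = c_i\vy$ with $c_i := x_i - \tfrac1m\1_m^\top\vx$. Substituting these into the identity derived there, the form specializes to
\[
P(\vx,\vy) = \Big(\sum_{i=1}^m c_i^2\Big)\,\vy^\top Q \vy \;+\; \tfrac{1}{m}(\1_m^\top\vx)^2\,\vy^\top R \vy .
\]

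For the second summand I would factor the PSD matrix $R = \sum_{p=1}^{\operatorname{rank}(R)} \vv_p \vv_p^\top$ (e.g.\ by spectral decomposition), giving $\vy^\top R \vy = \sum_p (\vv_p^\top \vy)^2$ and hence
\[
\tfrac{1}{m}(\1_m^\top\vx)^2\,\vy^\top R \vy = \sum_{p=1}^{\operatorname{rank}(R)}\Big(\tfrac{1}{\sqrt m}(\1_m^\top\vx)(\vv_p^\top\vy)\Big)^2 ,
\]
a sum of $\operatorname{rank}(R)$ squares of bilinear forms. The key point for the first summand is that the coefficient vector $\mathbf{c}=(c_1,\dots,c_m)^\top$ obeys the linear constraint $\1_m^\top\mathbf{c}=0$; equivalently $\mathbf{c} = (I_m - \tfrac1m\1_m\1_m^\top)\vx$, so that $\sum_{i=1}^m c_i^2 = \vx^\top (I_m - \tfrac1m\1_m\1_m^\top)\vx$. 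Since $I_m - \tfrac1m\1_m\1_m^\top$ is an orthogonal projection of rank $m-1$, writing it as $\sum_{t=1}^{m-1}\vu_t\vu_t^\top$ yields $\sum_{i=1}^m c_i^2 = \sum_{t=1}^{m-1}(\vu_t^\top\vx)^2$. Factoring $Q=\sum_{s=1}^{\operatorname{rank}(Q)}\vw_s\vw_s^\top$ as well, I obtain
\[
\Big(\sum_{i=1}^m c_i^2\Big)\vy^\top Q\vy = \sum_{t=1}^{m-1}\sum_{s=1}^{\operatorname{rank}(Q)}\big((\vu_t^\top\vx)(\vw_s^\top\vy)\big)^2 ,
\]
which is a sum of exactly $(m-1)\operatorname{rank}(Q)$ squares of bilinear forms. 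Adding the two contributions gives the bound \eqref{e4}.

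The step that requires genuine care — and the one separating this estimate from the naive count — is the observation that the centered coefficients $c_i$ span only an $(m-1)$-dimensional subspace rather than all of $\R^m$. A literal substitution of the $m$ block terms would produce $m\cdot\operatorname{rank}(Q)$ squares; the improvement to $(m-1)\cdot\operatorname{rank}(Q)$ comes precisely from the rank deficiency of the centering projection $I_m-\tfrac1m\1_m\1_m^\top$. Everything else is the routine factorization of the PSD matrices $Q$ and $R$ together with the collection of products of linear forms in $\vx$ and $\vy$ into genuine bilinear forms.
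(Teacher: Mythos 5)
Your proposal is correct. It rests on the same structural insight as the paper's proof --- splitting $P$ along the direction $\1_m$ versus its orthogonal complement --- but the execution differs in a way worth noting. The paper forms the Gram matrix $M = I_m \otimes Q + \tfrac{1}{m}(\1_m\1_m^\top)\otimes(R-Q)$, block-diagonalizes it by the congruence $(U^\top\otimes I_n)M(U\otimes I_n)=\operatorname{diag}(R,Q,\dots,Q)$ to conclude $\operatorname{rank}(M)=\operatorname{rank}(R)+(m-1)\operatorname{rank}(Q)$, and then applies a generic rank factorization $M=\sum_p \mathbf{w}_p\mathbf{w}_p^\top$. You instead specialize the scalar identity from the proof of Theorem~\ref{thm:sos_xsym} to $\vz=\vx\otimes\vy$, obtaining $P(\vx,\vy)=\bigl(\vx^\top(I_m-\tfrac1m\1_m\1_m^\top)\vx\bigr)\,\vy^\top Q\vy+\tfrac1m(\1_m^\top\vx)^2\,\vy^\top R\vy$, and then expand each summand as a product of sums of squares, using that the centering projection has rank $m-1$. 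This avoids any Kronecker congruence or matrix rank computation and produces the explicit decomposition directly; in fact the squares you obtain, $(\vu_t^\top\vx)(\vw_s^\top\vy)$ and $\tfrac{1}{\sqrt m}(\1_m^\top\vx)(\vv_p^\top\vy)$, are exactly those generated by the spectral construction in Section~\ref{sec:computation} of the paper, so your argument simultaneously proves the bound and exhibits the attaining decomposition. What the paper's route buys in exchange is the rank identity \eqref{ee8} for $M$ itself, which ties into the Gram-matrix characterization of SOS rank in Theorem~\ref{t6.1}. One small omission: the theorem also asserts $Q\succeq 0$ and $R\succeq 0$; you use this implicitly when factoring $Q$ and $R$, so you should state explicitly that it follows from Theorem~\ref{thm:psd_xsym} applied to the PSD form $P$.
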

	
	\begin{proof}
		Because $P$ is PSD, Theorem~\ref{thm:psd_xsym} guarantees $Q \succeq 0$ and $R \succeq 0$.
		Let $\A$ be the unique symmetric biquadratic tensor corresponding to $P$ and define the symmetric matrix
		\begin{equation} \label{ee5}
			M = I_m \otimes Q + \tfrac{1}{m}(\mathbf{1}_m\mathbf{1}_m^\top) \otimes (R-Q)
			\in \R^{mn\times mn},
		\end{equation}
		as in the proof of Theorem~\ref{thm:sos_xsym}.
		For any $\vx\in\R^m,\vy\in\R^n$ set $\vz=\vx\otimes\vy$; then
		\begin{equation} \label{ee6}
			P(\vx,\vy) = \vz^\top M\vz.
		\end{equation}
		The matrix $M$ is positive semidefinite and, via the orthogonal transformation
		$U$ whose first column is $\mathbf{1}_m/\sqrt{m}$ and remaining columns an orthonormal basis of $\mathbf{1}_m^\perp$, one obtains the block diagonal form
		\begin{equation} \label{ee7}
			(U^\top\otimes I_n)\, M\, (U\otimes I_n)
			= \operatorname{diag}\bigl(R,\, Q,\, \dots,\, Q\bigr),
		\end{equation}
		where $R$ appears once and $Q$ appears $m-1$ times.
		Consequently
		\begin{equation} \label{ee8}
			\operatorname{rank}(M)=\operatorname{rank}(R)+(m-1)\operatorname{rank}(Q).
		\end{equation}
		
		{Suppose that $M = \sum_{p=1}^{\operatorname{rank}(M)} \mathbf{w}_p\mathbf{w}_p^\top$. Then
			\[
			P(\vx,\vy)
			= \sum_{p=1}^{\operatorname{rank}(M)} \bigl(\mathbf{w}_p^\top(\vx\otimes\vy)\bigr)^2= \sum_{p=1}^{\operatorname{rank}(M)} \bigl(\vx^\top W_p \vy\bigr)^2.
			\]
			Here, $W_p\in\mathbb R^{m\times n}$ is the matricization of $w_p\in\mathbb R^{m n}$.
			This combined with \eqref{ee8} shows that  $P$ is a sum of $\operatorname{rank}(M)$ squares of bilinear forms, proving \eqref{e4}.}
	\end{proof}
	
	\begin{remark}\label{rem:relationship_bounds}
		\textbf{Relationship between Theorem~\ref{thm:sos_rank_bound} and Theorem~4 of \cite{CQX25}.}
		Theorem~\ref{thm:sos_rank_bound} provides a refined, structure-dependent bound for $x$-symmetric forms:
		$\operatorname{SOS\text{-}rank}(P) \le \operatorname{rank}(R) + (m-1)\operatorname{rank}(Q)$.
		For the completely squared form $P_{m,n}$ {with $A=B=O$,} we have  $\operatorname{rank}(Q)=\operatorname{rank}(R)=n$,
		so Theorem~\ref{thm:sos_rank_bound} yields the same $mn$ bound as Theorem~4 of \cite{CQX25}.
		However, for many $x$-symmetric forms, the matrices $Q$ and $R$ have rank much smaller than $n$,
		making Theorem~\ref{thm:sos_rank_bound} significantly sharper than the universal $mn$ bound.
		The value of Theorem~\ref{thm:sos_rank_bound} lies in its ability to exploit the $x$-symmetry
		to obtain tighter bounds and to guide the efficient computational procedure of Section~\ref{sec:computation}.
	\end{remark}

	Moreover, 
	an SOS decomposition whose rank equals  the upper bound in  (\ref{e4}) is  attained in Section~\ref{sec:computation}.
	Specifically, the explicit spectral  construction of Section~\ref{sec:computation} produces exactly this many squares (one for each positive eigenvalue of $R$ and $m-1$ copies of each positive eigenvalue of $Q$), so the bound is attainable by that construction.

	\section{Computational Method for SOS Decomposition}
	\label{sec:computation}
	
	Theorem~\ref{thm:sos_xsym} provides a constructive proof that every PSD $x$-symmetric biquadratic form admits an SOS decomposition.  Here we outline a concrete numerical procedure to compute such a decomposition.

	{We may rewrite the biquadratic form in \eqref{equ:monic_sbq}  as follows, 
		\begin{align*}
			P(\vx, \vy)=(\vx^\top\vx)(\vy^\top(I_n+B-A)\vy) + (\1_m^\top\vx)^2  \left(\vy^\top A \vy\right).
		\end{align*}
		Suppose that $P$ is PSD. The construction of its SOS expression proceeds in three steps.}
	
	\begin{enumerate}
		\item  \textbf{Form the matrix $M$.}
		Let $Q = I_n + B - A$ and $R = I_n + B + (m-1)A$.
		Then the $mn \times mn$ symmetric matrix $M$ is defined by
		\[
		M = I_m \otimes Q + \frac{1}{m}(\mathbf{1}_m\mathbf{1}_m^\top) \otimes (R - Q).
		\]
		
		\item \textbf{Compute a positive semidefinite factorization of $M$.}
		{It follows from the proof in Theorem~\ref{thm:sos_xsym} that} $M \succeq 0$. {Thus}, we can obtain vectors $\mathbf{w}_1,\dots,\mathbf{w}_r \in \R^{mn}$ such that
		\[
		M = \sum_{p=1}^{r} \mathbf{w}_p \mathbf{w}_p^\top .
		\]
		A convenient choice is the Cholesky decomposition $M = L L^\top$ (after a suitable permutation if $M$ is singular), in which case the $\mathbf{w}_p$'s are the columns of $L$.  Alternatively, one may use the spectral decomposition $M = U \Lambda U^\top$, set $\mathbf{w}_p = \sqrt{\lambda_p}\,\mathbf{u}_p$ (where $\lambda_p$ are the nonzero eigenvalues and $\mathbf{u}_p$ the corresponding eigenvectors), and then $r = \operatorname{rank}(M)$.
		
		\item \textbf{Extract the bilinear forms.}
		Each vector $\mathbf{w}_p$ can be reshaped into an $m \times n$ matrix $W_p$ by partitioning it into $m$ consecutive blocks of length $n$, i.e.,
		\[
		\mathbf{w}_p = \begin{pmatrix} \mathbf{w}_p^{(1)} \\ \vdots \\ \mathbf{w}_p^{(m)} \end{pmatrix},\qquad
		W_p = \bigl[ \mathbf{w}_p^{(1)} \;\cdots\; \mathbf{w}_p^{(m)} \bigr]^\top .
		\]
		Then the bilinear form $f_p$ is given by $f_p(\vx,\vy) = \vx^\top W_p \vy$, and we have
		\[
		P(\vx,\vy) = \sum_{p=1}^{r} f_p(\vx,\vy)^2 .
		\]
	\end{enumerate}
	
	If the original $x$-symmetric form is not monic, we first apply the reduction described in the proof of {Theorem}~\ref{cor:general_xsym}: for each $j$ with $d_j>0$ set $z_j = \sqrt{d_j}\,y_j$, thereby obtaining a monic form in the variables $\vx$ and $\vz$.  After constructing the SOS decomposition for the monic form, we substitute back $y_j = z_j/\sqrt{d_j}$ to obtain an SOS decomposition in the original variables.
	
	\medskip
	
	\noindent\textbf{Complexity and structure.}
	The matrix $M$ has size $mn$, which can be large when $m$ and $n$ are big.  However, its special Kronecker product structure can be exploited to accelerate the factorization.  Observe that $M$ is block circulant with respect to the $\vx$ indices.  By applying an orthogonal transformation that diagonalizes the matrix $\mathbf{1}_m\mathbf{1}_m^\top$, one can block  diagonalize $M$ into $m$ independent $n\times n$ matrices.  Specifically, let $U$ be an $m\times m$ orthogonal matrix whose first column is $\mathbf{1}_m/\sqrt{m}$ and whose remaining columns are any orthonormal basis of $\mathbf{1}_m^\perp$.  Then
	\[
	(U^\top \otimes I_n) \, M \, (U \otimes I_n) =
	\begin{bmatrix}
		R & & \\
		& Q & \\
		& & \ddots \\
		& & & Q
	\end{bmatrix},
	\]
	where the first block is {$R$,}
	and the remaining $m-1$ blocks are each $Q$.  Consequently, the eigenvalues of $M$ are precisely the eigenvalues of $R$ (with multiplicity $1$) together with the eigenvalues of $Q$ (with multiplicity $m-1$).  Moreover, the eigenvectors of $M$ can be obtained from those of $Q$ and $R$ via the same transformation.  	
	Using this structured approach, one may directly compute the eigenvalues and eigenvectors of $Q$ and $R$, then assemble the vectors $\mathbf{w}_p$ without ever forming the full matrix $M$.  We summarize the efficient procedure as follows.
	
	\begin{enumerate}
		\item Compute the spectral decompositions $Q = U_Q \Lambda_Q U_Q^\top$ and $R = U_R \Lambda_R U_R^\top$.
		\item For each positive eigenvalue $\lambda$ of $Q$ with eigenvector $\mathbf{u}$, produce $m-1$ vectors $\mathbf{w}$ of length $mn$ by setting, for $k=2,\dots,m$,
		\[
		\mathbf{w} = \mathbf{v}_k \otimes \sqrt{\lambda}\,\mathbf{u},
		\]
		where $\{\mathbf{v}_k\}_{k=2}^m$ is any orthonormal basis of $\mathbf{1}_m^\perp$ (e.g., the last $m-1$ columns of the $m\times m$ discrete cosine transform matrix).
		\item For each positive eigenvalue $\mu$ of $R$ with eigenvector $\mathbf{u}$, produce one vector
		\[
		\mathbf{w} = \frac{1}{\sqrt{m}}\mathbf{1}_m \otimes \sqrt{\mu}\,\mathbf{u}.
		\]
		\item Each such $\mathbf{w}$ gives a bilinear form as described above.
	\end{enumerate}
	
The use of spectral decomposition significantly reduces the computational cost of calculating the eigenpairs of $M$ from $O(m^3 n^3)$ to
$O(n^3 + m n^2)$, 	which is a substantial saving when $m$ is large.
Here,  the $O(n^3)$ term corresponds to the first step, while the second and third steps require  $O(m n^2)$ and  $O(n^2)$  operations, respectively.
	
	The number of squares obtained in this way equals $\operatorname{rank}(Q)\cdot(m-1) + \operatorname{rank}(R)$, which is at most $mn$ and often much smaller when $Q$ and $R$ are low rank.

	{\section{Extension to General Partially Symmetric Forms}}
	\label{subsec:extension_general_xsym}

	For general $x$-symmetric biquadratic forms,  the diagonal coefficients (coefficients of $x_i^2 y_j^2$) are independent of $i$; denote them by $d_j$ for $j=1,\dots,n$.
	Then,
	\begin{align}\label{general_P}
		P(\vx, \vy)=\sum_{i=1}^m \sum_{j=1}^n d_jx_i^2 y_j^2
		+ \sum_{i \neq k} \sum_{j, l=1}^n a_{jl} x_i x_k y_jy_l  + \sum_{i=1}^m \sum_{j \neq l} b_{jl}x_i^2 y_j y_l.
	\end{align}
	The SOS result for monic $x$-symmetric forms extends immediately to all $x$-symmetric PSD biquadratic forms, as the following {theorem} shows.
	
	\begin{Thm}\label{cor:general_xsym}
		Let $P$ be an $x$-symmetric biquadratic form (not necessarily monic). If $P$ is positive semi-definite, then $P$ is a sum of squares of bilinear forms.
	\end{Thm}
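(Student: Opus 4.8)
The plan is to reduce the general $x$-symmetric case to the monic case already settled in Theorem~\ref{thm:sos_xsym} by rescaling the $\vy$-variables, after first disposing of the degenerate directions in which the diagonal coefficient vanishes. Writing $P$ in the form \eqref{general_P} and grouping the $\vx$-factors as in Theorem~\ref{thm:psd_xsym}, I would record the quadratic-in-$\vy$ representation $P(\vx,\vy)=\vy^\top G(\vx)\vy$ with $G(\vx)=(\vx^\top\vx)(D+B)+\bigl((\1_m^\top\vx)^2-\vx^\top\vx\bigr)A$, where $D=\mathrm{diag}(d_1,\dots,d_n)$. Evaluating at $\vx=\ve_i$ and $\vy=\ve_j$ gives $P(\ve_i,\ve_j)=d_j$, so positive semi-definiteness forces $d_j\ge 0$ for every $j$.

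The hard part will be the indices $j$ with $d_j=0$, for which the substitution $z_j=\sqrt{d_j}\,y_j$ collapses and cannot be inverted; for these I must show that $P$ does not depend on $y_j$ at all. To do so I would use that $P\ge 0$ is equivalent to $G(\vx)\succeq 0$ for all $\vx$, and specialize to $\vx=\tfrac{1}{\sqrt m}\1_m$ and to any unit $\vx\perp\1_m$, which yields the two necessary conditions $R:=D+B+(m-1)A\succeq 0$ and $Q:=D+B-A\succeq 0$, exactly as in the proof of Theorem~\ref{thm:psd_xsym}. For an index with $d_j=0$ (and $b_{jj}=0$), the diagonal entries are $Q_{jj}=-a_{jj}\ge 0$ and $R_{jj}=(m-1)a_{jj}\ge 0$, which for $m\ge 2$ force $a_{jj}=0$, hence $Q_{jj}=R_{jj}=0$. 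Since a positive semidefinite matrix with a zero diagonal entry has the whole corresponding row and column equal to zero, the $j$-th rows of both $Q$ and $R$ vanish; subtracting the two relations gives $m\,a_{jl}=0$, and therefore $a_{jl}=b_{jl}=0$ for all $l$. Thus every coefficient attached to $y_j$ is zero and $y_j$ is absent from $P$.

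Having removed all variables $y_j$ with $d_j=0$, the remaining form $P'$ in $\vx$ and the surviving $\vy$-variables is still PSD and $x$-symmetric and now has all positive diagonal coefficients. The substitution $z_j=\sqrt{d_j}\,y_j$ is then an invertible diagonal change of the $\vy$-variables carrying $P'$ to a monic $x$-symmetric form $\tilde P(\vx,\vz)$, which inherits positive semi-definiteness. Theorem~\ref{thm:sos_xsym} gives $\tilde P(\vx,\vz)=\sum_p(\vx^\top W_p\vz)^2$, and substituting back $z_j=\sqrt{d_j}\,y_j$ turns each factor into a bilinear form in $\vx$ and $\vy$, exhibiting $P$ as a sum of squares of bilinear forms. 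The only point requiring care beyond the monic argument is the zero-diagonal step of the second paragraph, which is precisely what guarantees that the rescaling reduction is lossless.
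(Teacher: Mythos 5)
Your proposal is correct and takes essentially the same route as the paper's proof: first show that every variable $y_j$ with $d_j=0$ is entirely absent from $P$, then rescale the surviving variables by $z_j=\sqrt{d_j}\,y_j$ to reach the monic case and invoke Theorem~\ref{thm:sos_xsym}. The only difference is one of packaging in the degenerate case --- where the paper kills the coefficients $a_{jl}$, $b_{jl}$ through gradient conditions at the zeros of the nonnegative quadratics $P(\ve_{i_0},\cdot)$, $P(\cdot,\ve_{j_0})$ and $P(\tfrac{1}{\sqrt m}\1_m,\cdot)$, you derive the global conditions $Q\succeq 0$, $R\succeq 0$ by specializing $\vx$ and then use the fact that a PSD matrix with a zero diagonal entry has a zero row; these are equivalent first-order arguments, and your computation ($Q_{jj}=-a_{jj}\ge 0$, $R_{jj}=(m-1)a_{jj}\ge 0$, hence $a_{jj}=0$ for $m\ge 2$, then $ma_{jl}=R_{jl}-Q_{jl}=0$) is sound.
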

	
	\begin{proof}
		Evaluating $P$ {in \eqref{general_P}} at $\vx = \mathbf{e}_i$ and $\vy = \mathbf{e}_j$ gives $P(\mathbf{e}_i, \mathbf{e}_j) = d_j \ge 0$ for all $i,j$, because $P$ is PSD.
		Let $J = \{ j \in [n] : d_j > 0 \}$ and $J_0 = \{ j \in [n] : d_j = 0 \}$. We consider two cases.
		
		\textbf{Case 1: $J_0 = \emptyset$ (all $d_j > 0$).}
		Define a linear change of variables in $\vy$ by $z_j = \sqrt{d_j} y_j$ for $j=1,\dots,n$. Then
		\begin{align*}
			P(\vx, \vy) &= \tilde{P}(\vx, \vz)\\
			& := \sum_{j=1}^n \left( \sum_{i=1}^m x_i^2 \right) z_j^2  + \sum_{i \neq k} \sum_{j,l=1}^n \tilde{a}_{jl} x_i x_k z_j z_l + \sum_{i=1}^m \sum_{j,l=1}^n \tilde{b}_{jl} x_i^2 z_j z_l,
		\end{align*}
		where $\tilde{a}_{jl} = a_{jl}/\sqrt{d_j d_l}$ and $\tilde{b}_{jl} = b_{jl}/\sqrt{d_j d_l}$. Then $\tilde{P}$ is a monic $x$-symmetric biquadratic form (with all diagonal coefficients equal to $1$). Moreover, $\tilde{P}$ is PSD because $P$ is PSD and the transformation is invertible. By Theorem~\ref{thm:sos_xsym}, $\tilde{P}$ is SOS, i.e., there exist bilinear forms $f_p(\vx, \vz)$ such that $\tilde{P} = \sum_p f_p^2$. Substituting back $z_j = \sqrt{d_j} y_j$, each $f_p(\vx, \vz)$ becomes a bilinear form in $\vx$ and $\vy$ (since it is linear in $\vz$ and $\vz$ is linear in $\vy$). Hence $P$ is SOS.
		
		\textbf{Case 2: $J_0 \neq \emptyset$ (some $d_j = 0$).}
		For any $j_0 \in J_0$, we have $d_{j_0}=0$. Fix an arbitrary index $i_0 \in [m]$. Taking $\vx = \mathbf{e}_{i_0}$ and $\vy = \mathbf{e}_{j_0}$ yields $P(\mathbf{e}_{i_0}, \mathbf{e}_{j_0}) = d_{j_0} = 0$. Since $P$ is PSD, the quadratic form in $\vy$ given by $P(\mathbf{e}_{i_0}, \vy)$ is nonnegative and vanishes at $\vy = \mathbf{e}_{j_0}$. Therefore, the gradient with respect to $\vy$ at $\mathbf{e}_{j_0}$ must be zero,
		i.e.,
		\[\nabla_{\vy} P(\mathbf{e}_{i_0}, \mathbf{e}_{j_0})=2\begin{bmatrix}
			b_{1j_0}&\cdots & b_{{j_0-1}j_0} & 0 & b_{(j_0+1)j_0}&\cdots & b_{{n}j_0}	
		\end{bmatrix}^\top=0.\]
		Hence, we get $b_{j j_0} = 0$ for all $j$.
		
		Similarly, 	 {the quadratic form in $\vx$ given by $P(\vx,\mathbf{e}_{j_0})$  is nonnegative and vanishes at $\vx = \mathbf{e}_{i_0}$. Thus,}
		it follows from \[\nabla_{\vx} P(\mathbf{e}_{i_0}, \mathbf{e}_{j_0})=2a_{j_0,j_0}(\1_m-\mathbf{e}_{i_0})=0 \]
		that $a_{j_0,j_0}=0$.
		Furthermore, {the quadratic form in  $\vy$ given by $P(\frac{1}{\sqrt{m}}\1_m,\vy)$  is  nonnegative and vanishes at $\vy = \mathbf{e}_{j_0}$. Thus,}
		\[\nabla_{\vy} P(\frac{1}{\sqrt{m}}\1_m, \mathbf{e}_{j_0})=2(m-1)\begin{bmatrix}
			a_{1j_0}&\cdots & a_{{j_0-1}j_0} & 0 & a_{(j_0+1)j_0}&\cdots & a_{{n}j_0}	
		\end{bmatrix}^\top\]
		implies $a_{j j_0} = 0$ for all $j$. 
		Consequently,  the variable $y_{j_0}$ does not appear in any term of $P$.
		Therefore, $P$ does not depend on $y_{j_0}$ for any $j_0 \in J_0$.
		
		Thus, $P$ can be viewed as an $x$-symmetric biquadratic form in the variables $\vx$ and $\vy' = (y_j)_{j \in J}$, and for $j \in J$ we have $d_j > 0$. This reduces to Case~1. (If $J$ is empty, then $P$ does not depend on $\vy$ at all and is identically zero, which is trivially SOS.)
	\end{proof}
	
	\begin{remark}
		{Theorem}~\ref{cor:general_xsym} completely resolves the question for the $x$-symmetric class:
		every PSD $x$-symmetric biquadratic form, regardless of its diagonal values, admits an SOS decomposition.
	\end{remark}

	\begin{remark}\label{rem:nonmonic_rank_bound}
		When the original $x$-symmetric form is not monic, the scaling argument of {Theorem}~\ref{cor:general_xsym} reduces the problem to the monic case, possibly with a smaller number $n'$ of active $y$-variables.
		Applying Theorem~\ref{thm:sos_rank_bound} to the reduced form yields the same type of bound, now involving the ranks of the reduced matrices $\widetilde{Q}$ and $\widetilde{R}$.
	\end{remark}

	\begin{remark}
		By swapping the roles of $\vx$ and $\vy$, the same result holds for $y$-symmetric biquadratic forms: every PSD $y$-symmetric biquadratic form is SOS.   Thus, every PSD partially symmetric biquadratic form is SOS.
	\end{remark}

{\section{Lower Bounds for SOS Rank in Small Dimensions}}
	
In this section, we use simple biquadratic forms as a tool to present some lower bounds for SOS rank in small dimensions. 

\medskip

\subsection{Simple Biquadratic Forms}

Let \(m \ge n\).   We say a biquadratic form is a \textbf{simple biquadratic forms} if it contains  only distinct terms of the type {\(x_i^2 y_j^2\).}   Then we define a simple biquadratic form series \(P_{m,n,s}\) as follows.
For each \(s = 1,2,\dots,mn\), the form \(P_{m,n,s}\) contains exactly \(s\) distinct terms of the type \(x_i^2 y_j^2\).

{Assume  that} the terms are taken in the following order.
For \(k = 0,1,\dots,mn-1\), write \(k = p m + q\) with integers {\(0\le p <n\)} and \(0 \le q < m\)
(so \(p = \lfloor k/m \rfloor\) and \(q = k \bmod m\)).
Define
\[
i_k = q+1, \qquad j_k = {\big( p + q\big)} \bmod n  + 1.
\]
Then the sequence of index pairs is \((i_0,j_0), (i_1,j_1), \dots, (i_{mn-1},j_{mn-1})\).
We set
\[
P_{m,n,s}(\vx,\vy) = \sum_{k=0}^{s-1} x_{i_k}^2 y_{j_k}^2.
\]

Thus \(P_{m,n,s}\) has exactly \(s\) distinct square terms, and \(P_{m,n,mn}\) contains every term
\(x_i^2y_j^2\) (\(i=1,\dots,m\), \(j=1,\dots,n\)) exactly once.

For the small dimensions used in our theorems, the resulting forms are:

\noindent\textbf{Case \(m=2, n=2\):}
\[
\begin{aligned}
P_{2,2,1} &= x_1^2 y_1^2, \\
P_{2,2,2} &= x_1^2 y_1^2 + x_2^2 y_2^2, \\
P_{2,2,3} &= x_1^2 y_1^2 + x_2^2 y_2^2 + x_1^2 y_2^2, \\
P_{2,2,4} &= x_1^2 y_1^2 + x_2^2 y_2^2 + x_1^2 y_2^2 + x_2^2 y_1^2 .
\end{aligned}
\]

\noindent\textbf{Case \(m=3, n=2\):}
\[
\begin{aligned}
P_{3,2,1} &= x_1^2 y_1^2, \\
P_{3,2,2} &= x_1^2 y_1^2 + x_2^2 y_2^2, \\
P_{3,2,3} &= x_1^2 y_1^2 + x_2^2 y_2^2 + x_3^2 y_1^2, \\
P_{3,2,4} &= x_1^2 y_1^2 + x_2^2 y_2^2 +  x_3^2 y_1^2 + x_1^2 y_2^2 , \\
P_{3,2,5} &= x_1^2 y_1^2 + x_2^2 y_2^2 + x_3^2 y_1^2 + x_1^2 y_2^2  + x_2^2 y_1^2, \\
P_{3,2,6} &= x_1^2 y_1^2 + x_2^2 y_2^2 + x_3^2 y_1^2 + x_1^2 y_2^2 + x_2^2 y_1^2 + x_3^2 y_2^2 .
\end{aligned}
\]

\noindent\textbf{Case \(m=3, n=3\):}
\[
\begin{aligned}
P_{3,3,1} &= x_1^2 y_1^2, \\
P_{3,3,2} &= x_1^2 y_1^2 + x_2^2 y_2^2, \\
P_{3,3,3} &= x_1^2 y_1^2 + x_2^2 y_2^2 + x_3^2 y_3^2, \\
P_{3,3,4} &= x_1^2 y_1^2 + x_2^2 y_2^2 + x_3^2 y_3^2 + x_1^2 y_2^2, \\
P_{3,3,5} &= x_1^2 y_1^2 + x_2^2 y_2^2 + x_3^2 y_3^2 + x_1^2 y_2^2 + x_2^2 y_3^2, \\
P_{3,3,6} &= x_1^2 y_1^2 + x_2^2 y_2^2 + x_3^2 y_3^2 + x_1^2 y_2^2 + x_2^2 y_3^2 + x_3^2 y_1^2 .
\end{aligned}
\]

\medskip

\subsection{Tightness Examples and a General Lower Bound}

Theorem 1 of \cite{CQX25} shows that a $2 \times 2$ PSD biquadratic form can always be expressed as the sum of three squares.
 Theorem 2 of \cite{QCX25} shows that a $3 \times 2$ PSD biquadratic form can always be expressed as the sum of four squares.
The following theorem shows that these two bounds are tight, and present a general lower bound for $m \ge n=2$.

\begin{Thm}[A general lower bound for $m \times 2$ forms]\label{thm:m2-lower}
Let $m \ge 2$.  The simple biquadratic form
\[
P_{m,2,m+1}(\vx,\vy) = \sum_{k=0}^{m} x_{i_k}^2 y_{j_k}^2,
\]
where $(i_k,j_k)$ are defined by the ordering rule in Section~2, is positive semidefinite and satisfies
\[
\operatorname{SOS\text{-}rank}(P_{m,2,m+1}) = m+1.
\]
In particular, $P_{m,2,m+1}$ cannot be written as a sum of $m$ squares of bilinear forms.
\end{Thm}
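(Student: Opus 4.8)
The plan is to treat the upper and lower bounds separately. The inequality $\SOSrank(P_{m,2,m+1}) \le m+1$ is immediate: the form is literally a sum of $m+1$ monomial squares, $P_{m,2,m+1} = \sum_{k=0}^m (x_{i_k} y_{j_k})^2$, and each $x_{i_k}y_{j_k}$ is a bilinear form; this also shows $P_{m,2,m+1}$ is PSD. All the work lies in the matching lower bound $\SOSrank(P_{m,2,m+1}) \ge m+1$, i.e.\ that no representation as a sum of $m$ squares exists.

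First I would record the explicit shape of the form dictated by the ordering rule: writing $O = \{i \in [m] : i \text{ odd}\}$ and $E = \{i \in [m] : i \text{ even}\}$, one checks that
\[
P_{m,2,m+1}(\vx,\vy) = \Big(\sum_{i \in O} x_i^2\Big) y_1^2 + \Big(\sum_{i \in E} x_i^2 + x_1^2\Big) y_2^2 ,
\]
so the coefficient of $y_1 y_2$ vanishes identically. Now suppose $P_{m,2,m+1} = \sum_{p=1}^r f_p^2$ with each $f_p$ bilinear. Since $\vy \in \R^2$, every such $f_p$ has the form $f_p(\vx,\vy) = (\va_p^\T \vx)\,y_1 + (\vb_p^\T \vx)\,y_2$ for some $\va_p, \vb_p \in \R^m$. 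Collecting the coefficients of $y_1^2$, $y_2^2$, and $y_1 y_2$ and matching against the display yields, in terms of the matrices $A = [\va_1 \cdots \va_r]$ and $B = [\vb_1 \cdots \vb_r] \in \R^{m\times r}$, the three conditions
\[
AA^\T = D_O, \qquad BB^\T = D_E + \ve_1 \ve_1^\T, \qquad AB^\T + BA^\T = 0,
\]
where $D_O, D_E$ are the diagonal $0$–$1$ matrices supported on $O$ and $E$.

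Next I would read these conditions row-wise. Let $\alpha_i$ and $\beta_i$ denote the $i$-th rows of $A$ and $B$. The first condition says $\{\alpha_i : i \in O\}$ is an orthonormal system in $\R^r$ while $\alpha_i = 0$ for $i \in E$; the second says $\{\beta_i : i \in E \cup \{1\}\}$ is orthonormal while $\beta_i = 0$ for $i \notin E \cup \{1\}$. The third condition reads $\alpha_i \cdot \beta_k + \alpha_k \cdot \beta_i = 0$ for all $i,k$. The crux is to feed the zero-pattern of the rows into this identity: for $i \in O$ and $k \in E \cup \{1\}$, the companion term $\alpha_k \cdot \beta_i$ always vanishes (because $\beta_i = 0$ unless $i = 1$, and $\alpha_k = 0$ when $k \in E$, while the case $i=k=1$ gives $2\alpha_1\cdot\beta_1 = 0$ directly), leaving $\alpha_i \cdot \beta_k = 0$. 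Hence every $\alpha_i$ ($i \in O$) is orthogonal to every $\beta_k$ ($k \in E\cup\{1\}$), so $\operatorname{span}\{\alpha_i : i \in O\}$ and $\operatorname{span}\{\beta_k : k \in E \cup \{1\}\}$ are mutually orthogonal subspaces of $\R^r$. Their dimensions are $|O|$ and $|E| + 1$, and since $|O| + |E| = m$, orthogonality forces $r \ge |O| + |E| + 1 = m+1$.

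The step I expect to be the main obstacle — and the one carrying the real content — is the orthogonality deduction above. The individual Gram conditions $AA^\T = D_O$ and $BB^\T = D_E + \ve_1\ve_1^\T$ only force $r \gtrsim m/2$, far from the target; it is precisely the vanishing of the $y_1 y_2$ cross term (the skew condition $AB^\T + BA^\T = 0$), interacting with the disjointness of the supports $O$ and $E \cup \{1\}$, that pushes the two orthonormal families into orthogonal complements and doubles the bound to $m+1$. Care is needed only in the bookkeeping of which rows vanish, so that the off-diagonal identity collapses to $\alpha_i \cdot \beta_k = 0$; once that is in hand, the dimension count is immediate. Combining the two bounds gives $\SOSrank(P_{m,2,m+1}) = m+1$.
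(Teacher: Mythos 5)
Your proof is correct and follows essentially the same route as the paper's: extract the $y_1$- and $y_2$-coefficient vectors attached to each $x_i$, use the vanishing of all mixed monomials to force orthogonality between the two families, and conclude that $m+1$ mutually orthogonal unit vectors cannot fit in $\R^r$ with $r\le m$. If anything, your handling of the $y_1y_2$ cross term is slightly more careful than the paper's: you work with the correct symmetrized condition $AB^\T+BA^\T=0$ and collapse it to $\alpha_i\cdot\beta_k=0$ via the support pattern, whereas the paper directly asserts $A_p\cdot B_q=0$ for all $p,q$.
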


\begin{proof}
The nonnegativity and the upper bound $\operatorname{SOS\text{-}rank}(P_{m,2,m+1}) \le m+1$ are immediate from the decomposition
\[
P_{m,2,m+1} = \sum_{k=0}^{m} (x_{i_k} y_{j_k})^2 .
\]

For the lower bound, assume for contradiction that $P_{m,2,m+1} = \sum_{t=1}^{m} L_t^2$ with each $L_t$ bilinear.
Write $L_t = \sum_{i=1}^{m} \sum_{j=1}^2 c_{ij}^{(t)} x_i y_j$.
Let $A_i = (c_{i1}^{(t)})_{t=1}^m \in \R^m$ and $B_i = (c_{i2}^{(t)})_{t=1}^m \in \R^m$.

From the square terms present in $P_{m,2,m+1}$ one obtains the following conditions:
\begin{itemize}
\item For the index $i_0=1$, both $x_1^2y_1^2$ and $x_1^2y_2^2$ appear; hence $\|A_1\|^2 = 1$ and $\|B_1\|^2 = 1$.
\item For each $i = 2,\dots,m$, exactly one of $x_i^2y_1^2$ or $x_i^2y_2^2$ appears, depending on the parity of $i$ in the ordering.  Consequently, for each such $i$, either $\|A_i\|^2=1$ and $B_i = 0$, or $\|B_i\|^2=1$ and $A_i = 0$.
\end{itemize}
The form $P_{m,2,m+1}$ contains no mixed terms $x_p x_q y_r y_s$ with $p \neq q$ or $r \neq s$.  Setting the coefficients of all such mixed monomials in $\sum_t L_t^2$ to zero yields the orthogonality relations
\[
A_p \cdot A_q = 0 \;(p\neq q), \qquad B_p \cdot B_q = 0 \;(p\neq q), \qquad A_p \cdot B_q = 0 \;(\text{all }p,q).
\]
Thus the nonzero vectors among $\{A_1,B_1,A_2,B_2,\dots,A_m,B_m\}$ are pairwise orthogonal.  Because of the square?term conditions, exactly $m+1$ of these vectors are nonzero and have norm $1$ (namely $A_1$, $B_1$, and one vector for each $i=2,\dots,m$).  We therefore have $m+1$ mutually orthogonal unit vectors in $\R^m$, which is impossible.  Hence no $m$-square decomposition exists, and $\operatorname{SOS\text{-}rank}(P_{m,2,m+1}) \ge m+1$.
\end{proof}

{\begin{remark} 
Fang and Huang \cite{FH26} informed us that they obtained the same lower bound as the above theorem but with a different method.
\end{remark}}

\medskip

\subsection{A Lower Bound for SOS Rank of $3 \times 3$ Biquadratic Forms}

Denote the SOS rank of a bilinear form by $\operatorname{sos}(P)$.

{\begin{Thm}[A $3 \times 3$ form requiring six squares]\label{thm:3x3-tight}
    Let
    \[
    P'(\vx,\vy) = P_{3,3,6}(\vx,\vy) = x_1^2 y_1^2 + x_2^2 y_2^2 + x_3^2 y_3^2 + x_1^2 y_2^2 + x_2^2 y_3^2 + x_3^2 y_1^2,
    \]
    where $\vx = (x_1,x_2,x_3),\vy = (y_1,y_2,y_3) \in \R^3$.
    Then
    \[
    \operatorname{sos}(P') = 6.
    \]
\end{Thm}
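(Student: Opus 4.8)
The plan is to prove the two inequalities $\operatorname{sos}(P') \le 6$ and $\operatorname{sos}(P') \ge 6$ separately. The upper bound is immediate from the representation $P' = \sum (x_i y_j)^2$ taken over the six index pairs appearing in $P'$, so the whole argument concentrates on ruling out a decomposition into five squares. The overall strategy mirrors the proof of Theorem~\ref{thm:m2-lower}: assume a five-square representation, encode its coefficients as vectors in $\R^5$, and force too many mutually orthogonal unit vectors into that five-dimensional space.

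First I would suppose, for contradiction, that $P' = \sum_{t=1}^{5} L_t^2$ with each $L_t = \sum_{i,j=1}^{3} c_{ij}^{(t)} x_i y_j$ bilinear, and collect the coefficient vectors $v_{ij} = (c_{ij}^{(1)},\dots,c_{ij}^{(5)}) \in \R^5$ for $(i,j)\in[3]\times[3]$. Write $S = \{(1,1),(2,2),(3,3),(1,2),(2,3),(3,1)\}$ for the set of index pairs occurring in $P'$. Matching the coefficients of $\sum_t L_t^2$ against $P'$ yields three families of constraints: (a) the pure squares $x_i^2 y_j^2$ give $\|v_{ij}\|^2 = 1$ for $(i,j)\in S$ and $v_{ij}=0$ for $(i,j)\notin S$; (b) the monomials $x_i^2 y_j y_{j'}$ and $x_i x_{i'} y_j^2$ give $v_{ij}\cdot v_{ij'}=0$ and $v_{ij}\cdot v_{i'j}=0$; and (c) the genuinely mixed monomials $x_i x_{i'} y_j y_{j'}$ with $i\neq i'$, $j\neq j'$ give $v_{ij}\cdot v_{i'j'} + v_{ij'}\cdot v_{i'j}=0$. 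After (a), only the six vectors indexed by $S$ are nonzero, and each is a unit vector; the relations in (b) already make the six consecutive pairs around the natural $6$-cycle $v_{11},v_{12},v_{22},v_{23},v_{33},v_{31}$ pairwise orthogonal.

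The crux is to show that the remaining (non-consecutive) pairs are orthogonal too, which is exactly what family (c) must supply. By itself each relation in (c) only gives $v_{ij}\cdot v_{i'j'} = -\,v_{ij'}\cdot v_{i'j}$, so the key observation — and the step I expect to be the main obstacle — is a purely combinatorial fact about $S$: for any two pairs $(i,j),(i',j')\in S$ with $i\neq i'$ and $j\neq j'$, at least one of the two opposite corners $(i,j')$, $(i',j)$ lies outside $S$, hence carries a zero coefficient vector. I would prove this by introducing the cyclic offset $d(i,j)=(j-i)\bmod 3$, under which $S=\{d\in\{0,1\}\}$ and its complement is exactly $\{d=2\}$; a short congruence computation using $i\neq i'$, $j\neq j'$ and $d(i,j),d(i',j')\in\{0,1\}$ then shows $d(i,j')=2$ or $d(i',j)=2$. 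Granting this, the vanishing cross term reduces every relation in (c) to plain orthogonality $v_{ij}\cdot v_{i'j'}=0$.

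Combining (b), (c), and the combinatorial lemma, all $\binom{6}{2}$ pairs among the six nonzero vectors are orthogonal, so they form six mutually orthogonal unit vectors in $\R^5$, which is impossible. This contradiction gives $\operatorname{sos}(P')\ge 6$, and together with the upper bound $\operatorname{sos}(P')\le 6$ it proves $\operatorname{sos}(P')=6$.
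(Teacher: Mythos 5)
Your proposal is correct and takes essentially the same route as the paper's proof: the identical six-square upper bound, the same contradiction setup encoding a hypothetical five-square decomposition as coefficient vectors in $\R^5$, and the same final count of six mutually orthogonal unit vectors in $\R^5$. The only difference is that you are more careful at the crux: the paper asserts outright that vanishing of the mixed monomials $x_i x_{i'} y_j y_{j'}$ forces all pairwise orthogonality, which a priori only gives $v_{ij}\cdot v_{i'j'} + v_{ij'}\cdot v_{i'j} = 0$, and your combinatorial lemma (for $(i,j),(i',j')\in S$ with $i\neq i'$, $j\neq j'$, at least one of the opposite corners $(i,j')$, $(i',j)$ lies outside $S$ --- true, as your cyclic-offset computation or a direct check of the nine relevant pairs confirms) is precisely the fact needed to upgrade that relation to genuine orthogonality, a step the paper's proof uses implicitly without stating.
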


\begin{proof}
    The upper bound $\operatorname{sos}(P') \le 6$ is immediate by taking
    \[
    L_1 = x_1y_1,\; L_2 = x_2y_2,\; L_3 = x_3y_3,\;
    L_4 = x_1y_2,\; L_5 = x_2y_3,\; L_6 = x_3y_1,
    \]
    which yields $P' = \sum_{i=1}^6 L_i^2$.

    For the lower bound, assume for contradiction that $P' = \sum_{i=1}^5 L_i^2$ with each $L_i$ bilinear.
    Write $L_i = \sum_{a,b=1}^3 c_{ab}^{(i)} x_a y_b$.
    Because $P'$ contains exactly the six square terms $x_a^2 y_b^2$ for\\
    $(a,b) \in S = \{(1,1),(2,2),(3,3),(1,2),(2,3),(3,1)\}$ with coefficient $1$ and no other square terms,
    we must have
    \[
    \sum_{i=1}^5 (c_{ab}^{(i)})^2 = 1 \quad ((a,b)\in S), \qquad
    \sum_{i=1}^5 (c_{ab}^{(i)})^2 = 0 \quad ((a,b)\notin S).
    \]
    Hence each $L_i$ involves only the six coefficient types corresponding to $S$:
    \[
    L_i = a_i x_1y_1 + b_i x_1y_2 + c_i x_2y_2 + d_i x_2y_3 + e_i x_3y_3 + f_i x_3y_1.
    \]
    Define vectors $A,B,C,D,E,F \in \R^5$ by $A=(a_1,\dots,a_5)$, etc.
    From the square-term conditions we have $\|A\|=\|B\|=\cdots=\|F\|=1$.

    Since $P'$ contains no mixed terms $x_px_qy_ry_s$ with $p\neq q$ or $r\neq s$,
    all cross terms in $\sum_{i=1}^5 L_i^2$ must vanish.
    This yields the orthogonality conditions
    \[
    A\!\cdot\!B = A\!\cdot\!C = \dots = E\!\cdot\!F = 0,
    \]
    i.e., the six vectors $A,\dots,F$ in $\R^5$ are pairwise orthogonal and each has norm $1$.
    This is impossible because $\R^5$ can contain at most five nonzero mutually orthogonal vectors.
    Therefore no such decomposition with five squares exists, and $\operatorname{sos}(P') \ge 6$.
\end{proof}}
	
 \section{An {Improved} Universal SOS Rank Bound for General Biquadratic Forms}
\label{sec:universal_bound}

The next theorem presents an explicit expression for the SOS Rank of a biquadratic form. Firstly,  recall that \cite{CQX25} any $m\times n$ biquadratic polynomial can be written as follows
	\begin{equation}\label{equ:M_gamma}
		P(\vx, \vy) = \sum_{i,k=1}^m \sum_{j,l=1}^n a_{ijkl}x_i y_j x_k y_l=\vz^\top (B+P(\Gamma))\vz:=\vz^\top M(\Gamma)\vz,
	\end{equation}
	where $\vz=\vx\otimes \vy$, $B_{(ij)(kl) = a_{ijkl}}\in\mathbb R^{mn\times mn}$, $\Gamma \in \mathbb R^{\binom{m}2 \times \binom{n}2}$ is a symmetric parameter matrix that captures the degrees of freedom in expressing the full cross terms with $i\neq k$ and $j\neq l$, and $P(\Gamma)$ is detailed in \cite{CQX25}.
	\begin{Thm} \label{t6.1}
		Suppose that the $m\times n$ biquadratic form in \eqref{equ:M_gamma} is SOS. Then we have
			\begin{equation}\label{def:SOS_rank}
		\operatorname{SOS\text{-}rank}(P)=\min_{\Gamma: M(\Gamma)\succeq 0}\;\operatorname{rank}(M(\Gamma)).
		\end{equation}
	\end{Thm}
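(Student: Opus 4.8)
The plan is to establish the identity by proving the two inequalities $\operatorname{SOS\text{-}rank}(P) \le \min_{\Gamma:\,M(\Gamma)\succeq 0}\operatorname{rank}(M(\Gamma))$ and the reverse inequality separately, using throughout the standard dictionary between bilinear forms and vectors in $\R^{mn}$. With $\vz = \vx\otimes\vy$, a bilinear form can be written as $f(\vx,\vy) = \vx^\top W\vy = \vw^\top\vz$, where $\vw\in\R^{mn}$ is the (row-major) vectorization of $W$, and conversely every $\vw\in\R^{mn}$ yields such a form. Thus a finite decomposition $P = \sum_p f_p^2$ corresponds to the symmetric positive semidefinite matrix $N = \sum_p \vw_p\vw_p^\top$ with $\vz^\top N\vz = P(\vx,\vy)$ on the Segre variety $\{\vz=\vx\otimes\vy\}$, and the number of squares may be taken equal to $\operatorname{rank}(N)$.

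For the upper bound, fix any $\Gamma$ with $M(\Gamma)\succeq 0$ and write $M(\Gamma) = \sum_{p=1}^{r}\vw_p\vw_p^\top$ with $r=\operatorname{rank}(M(\Gamma))$ (e.g.\ via the spectral decomposition). Then \eqref{equ:M_gamma} gives $P(\vx,\vy) = \vz^\top M(\Gamma)\vz = \sum_{p=1}^{r}(\vw_p^\top\vz)^2 = \sum_{p=1}^{r}(\vx^\top W_p\vy)^2$, an SOS decomposition into $r$ squares of bilinear forms. Hence $\operatorname{SOS\text{-}rank}(P)\le r$ for every feasible $\Gamma$, and minimizing over $\Gamma$ yields $\operatorname{SOS\text{-}rank}(P)\le \min_{\Gamma:\,M(\Gamma)\succeq 0}\operatorname{rank}(M(\Gamma))$.

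For the lower bound, take an optimal decomposition $P = \sum_{p=1}^{r^\star} f_p^2$ with $r^\star = \operatorname{SOS\text{-}rank}(P)$ and form $N = \sum_{p=1}^{r^\star}\vw_p\vw_p^\top \succeq 0$, so $\operatorname{rank}(N)\le r^\star$ and $\vz^\top N\vz = P(\vx,\vy)$ for all $\vx,\vy$. The crucial step is to identify $N$ with a member of the parametrized family, i.e.\ to exhibit $\Gamma^\star$ with $M(\Gamma^\star)=N$. Since $\vz^\top B\vz = \sum a_{ijkl}x_iy_jx_ky_l = P$ as well, the symmetric matrix $E := N - B$ satisfies $\vz^\top E\vz = 0$ for all $\vz=\vx\otimes\vy$; that is, $E$ lies in $\mathcal N := \{E=E^\top : \vz^\top E\vz = 0 \ \forall\, \vz=\vx\otimes\vy\}$. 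The representation \eqref{equ:M_gamma} from \cite{CQX25} is constructed precisely so that $\{P(\Gamma):\Gamma\}$ sweeps out $\mathcal N$, whence $E = P(\Gamma^\star)$ for some $\Gamma^\star$ and $N = B + P(\Gamma^\star) = M(\Gamma^\star)\succeq 0$. Therefore $\min_{\Gamma:\,M(\Gamma)\succeq 0}\operatorname{rank}(M(\Gamma)) \le \operatorname{rank}(N)\le r^\star$, and combining the two inequalities gives \eqref{def:SOS_rank}.

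The main obstacle is the completeness assertion invoked in the lower bound: that every symmetric $N$ with $\vz^\top N\vz\equiv P$ on the Segre variety has the form $M(\Gamma^\star)$, equivalently that the linear map $\Gamma\mapsto P(\Gamma)$ maps \emph{onto} $\mathcal N$. I would settle this by a dimension count. A monomial $x_ix_ky_jy_l$ with $i\neq k$ and $j\neq l$ receives contributions only from the four symmetric entry pairs of $M$ attached to $(\{i,k\},\{j,l\})$, and the sole freedom preserving $P$ is to redistribute mass among these while keeping their total fixed; this freedom has dimension exactly $\binom{m}{2}\binom{n}{2}$, matching the size of $\Gamma$. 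Verifying that $\Gamma\mapsto P(\Gamma)$ is injective into $\mathcal N$ together with $\dim\mathcal N = \binom{m}{2}\binom{n}{2}$ forces surjectivity onto $\mathcal N$. This is the only nonroutine ingredient, and it is where the explicit form of $P(\Gamma)$ from \cite{CQX25} must be used.
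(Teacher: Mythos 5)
Your proof is correct, but it follows a genuinely different route from the paper's. The paper's entire argument is a two-line appeal to Reznick's Newton polytope theorem: for any SOS decomposition $P=\sum_r f_r^2$ one has $\supp(f_r)\subseteq \tfrac12\mathrm{New}(P)$, hence every square appearing in a decomposition of a biquadratic form is automatically a square of a \emph{bilinear} form, and the Gram-matrix correspondence between bilinear decompositions and PSD members of the affine family $\{M(\Gamma)\}$ is then taken as read (deferred to \cite{CQX25}). You do the opposite: you never invoke Newton polytopes --- which is legitimate here because the paper defines SOS and SOS rank exclusively via squares of bilinear forms, so your optimal decomposition is bilinear by definition --- and instead you make rigorous precisely the step the paper glosses over, namely that every Gram matrix $N$ with $\vz^\top N\vz \equiv P$ on the Segre variety lies in the family $\{M(\Gamma)\}$. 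Your dimension count is sound: the kernel of $E\mapsto \vz^\top E\vz$ on symmetric $mn\times mn$ matrices has dimension $\tfrac{mn(mn+1)}{2}-\tfrac{m(m+1)}{2}\cdot\tfrac{n(n+1)}{2}=\binom{m}{2}\binom{n}{2}$ (all the freedom sits in the $i\neq k$, $j\neq l$ sector, one degree of freedom per pair of unordered index pairs, since coefficients of $x_i^2y_jy_l$, $x_ix_ky_j^2$ and $x_i^2y_j^2$ are each read off a single symmetric entry), which matches the dimension of the $\Gamma$-space, so injectivity of $\Gamma\mapsto P(\Gamma)$ forces surjectivity onto that kernel. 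The trade-off between the two proofs: the paper's Reznick step buys robustness, since the theorem then remains true if ``SOS'' is read in the broader real-algebraic-geometry sense of sums of squares of arbitrary quadratic polynomials --- a reading under which your lower-bound argument would have a gap, because you could not then assume the optimal decomposition consists of bilinear forms. If you want to cover that reading, prepend the one-line Newton polytope observation; under the paper's own definitions, your argument is complete and in fact supplies detail (the surjectivity of the $\Gamma$-parametrization) that the published proof only asserts.
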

	\begin{proof}
		Suppose that $P$ admits an SOS decomposition  $\sum_{r=1}^R f_r^2$.  Then, it holds that $\text{supp}(f_r)\subseteq \frac12 \text{New}(P)$ \cite[Theorem~1]{Re78}\cite[Theorem~4]{CQX25}. Here, $\text{New}(P)$ is the Newton polytope of $P$. From this  we may derive that the SOS decomposition can only be the SOS of bilinear forms, i.e., $\sum_{r} (\vc_r^\top\vz)^2$. Thus,	the SOS rank of $P$ equals the minimum possible rank of $M(\Gamma)$ satisfying  $M(\Gamma)\succeq0$. 		
	\end{proof}

While the previous sections focused on $x$-symmetric forms, the tool provided by Theorem~6.1 allows us to derive a universal upper bound on the SOS rank that holds for \emph{any} SOS biquadratic form, regardless of symmetry.

\begin{Thm}\label{thm:universal_bound}
For any integers $m,n\ge 2$, every SOS biquadratic form $P\in\R[x_1,\dots,x_m,y_1,\dots,y_n]$ satisfies
\[
\operatorname{SOS\text{-}rank}(P)\le mn-1.\]
\end{Thm}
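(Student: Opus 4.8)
The plan is to combine the exact rank characterization of Theorem~\ref{t6.1} with a boundary-of-spectrahedron argument. By Theorem~\ref{t6.1}, since $P$ is SOS, the spectrahedron $S = \{\Gamma : M(\Gamma) \succeq 0\}$ is nonempty and $\operatorname{SOS\text{-}rank}(P) = \min_{\Gamma \in S} \operatorname{rank}(M(\Gamma))$. It therefore suffices to produce a single $\Gamma^\ast \in S$ for which $M(\Gamma^\ast)$ is singular, i.e. $\operatorname{rank}(M(\Gamma^\ast)) \le mn-1$.

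First I would record the structure of the parametrization. The map $\Gamma \mapsto M(\Gamma) = B + P(\Gamma)$ is affine, and its direction space is the freedom space
\[
\mathcal{F} = \bigl\{ N = N^\top \in \R^{mn\times mn} : (\vx\otimes\vy)^\top N (\vx\otimes\vy) = 0 \ \text{for all } \vx\in\R^m,\ \vy\in\R^n\bigr\},
\]
because $P(\Gamma)$ encodes exactly the freedom in the cross terms with $i\neq k$ and $j\neq l$, all of which vanish on the rank-one vectors $\vz = \vx\otimes\vy$. For $m,n\ge 2$ this space is nontrivial: choosing any $i\neq k$ in $[m]$ and $j\neq l$ in $[n]$, the symmetric matrix realizing the relation $z_{(i,j)}z_{(k,l)} - z_{(i,l)}z_{(k,j)} = 0$ is a nonzero element of $\mathcal{F}$.

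The key lemma I would prove is that every nonzero $N \in \mathcal{F}$ is \emph{indefinite}. Indeed, if $N \succeq 0$ and $(\vx\otimes\vy)^\top N(\vx\otimes\vy)=0$ for all $\vx,\vy$, then $N(\vx\otimes\vy)=0$ for every rank-one vector; since the vectors $\ve_i\otimes\ve_j$ ($i\in[m]$, $j\in[n]$) form a basis of $\R^{mn}$, this forces $N=0$. Applying the same reasoning to $-N$, a nonzero $N\in\mathcal{F}$ must have both a positive and a negative eigenvalue. With this in hand the conclusion follows from a line argument: pick any $\Gamma_0\in S$; if $M(\Gamma_0)$ is singular we are done, so assume $M(\Gamma_0)\succ 0$, choose a nonzero $N\in\mathcal{F}$, and set $M(t):=M(\Gamma_0)+tN$, which remains in the parametrized family for all $t$ because $\mathcal{F}$ is its direction space. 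Since $N$ has a negative eigenvalue with eigenvector $\vv$, we have $\vv^\top M(t)\vv\to-\infty$, so $M(t)$ is not PSD for large $t$; as the eigenvalues depend continuously on $t$ and $M(0)\succ 0$, there is a smallest $t^\ast>0$ with $\lambda_{\min}(M(t^\ast))=0$. Then $M(t^\ast)\succeq 0$ is singular, equals $M(\Gamma^\ast)$ for some $\Gamma^\ast\in S$, and has rank at most $mn-1$, giving $\operatorname{SOS\text{-}rank}(P)\le mn-1$.

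I expect the main obstacle to be the bookkeeping that the direction space of the affine family $\{M(\Gamma)\}$ is precisely the freedom space $\mathcal{F}$ inherited from the construction of \cite{CQX25}, so that moving along $N\in\mathcal{F}$ genuinely leaves the biquadratic form $P$ unchanged — this is exactly what guarantees that $M(t^\ast)$ still represents $P$ and hence belongs to $S$. The indefiniteness lemma, though short, is the conceptual crux: it is what prevents the affine family from lying entirely inside the open positive-definite cone, and therefore what rules out the trivial rank-$mn$ situation uniformly for all $m,n\ge 2$.
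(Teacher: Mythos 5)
Your proposal is correct and follows essentially the same route as the paper: both invoke Theorem~\ref{t6.1} to reduce the claim to finding a singular PSD matrix in the affine family $\{M(\Gamma)\}$, and then move from a positive definite $M(\Gamma_0)$ along a nonzero direction of that family until the line hits the boundary of the PSD cone, where the rank drops to at most $mn-1$. Your indefiniteness lemma for nonzero elements of the freedom space $\mathcal{F}$ is a sharper justification of the exit step than the paper's brief appeal to pointedness of the PSD cone (which, strictly speaking, only guarantees the line leaves the cone in one of the two directions $\pm\Delta$), so it tightens, rather than changes, the argument.
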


\begin{proof}
Let $P$ be SOS. By Theorem~6.1,
\[
\operatorname{SOS\text{-}rank}(P)=\min_{\Gamma:M(\Gamma)\succeq 0}\rank(M(\Gamma)),
\]
where $M(\Gamma)$ is an $mn\times mn$ symmetric matrix representing $P$ as $P(\mathbf{x},\mathbf{y})=\mathbf{z}^\top M(\Gamma)\mathbf{z}$ with $\mathbf{z}=\mathbf{x}\otimes\mathbf{y}$.
The set $\mathcal{M}_P=\{M(\Gamma):\Gamma\}$ is an affine subspace of symmetric matrices of dimension $d=\binom{m}{2}\binom{n}{2}\ge 1$ (see \cite{CQX25}).
Let $\mathcal{M}_P^+=\{M\in\mathcal{M}_P:M\succeq 0\}$; since $P$ is SOS, $\mathcal{M}_P^+\neq\emptyset$.

Take any $M_0\in\mathcal{M}_P^+$. If $\rank(M_0)\le mn-1$, we are done.
If $\rank(M_0)=mn$, then $M_0$ is positive definite.
Because $\dim\mathcal{M}_P\ge1$, we can choose a nonzero direction $\Delta\in\mathcal{M}_P-M_0$ and consider the line $M(t)=M_0+t\Delta$.
Since the PSD cone is pointed, there exists $t^*>0$ such that $M(t)\succeq 0$ for $t\in[0,t^*]$ and $M(t^*)$ lies on the boundary of the PSD cone, i.e., $\rank(M(t^*))\le mn-1$.
Thus $M(t^*)\in\mathcal{M}_P^+$ and has rank at most $mn-1$.
Hence the minimum rank over $\mathcal{M}_P^+$ is at most $mn-1$, and by Theorem~5.1 this minimum equals $\operatorname{SOS\text{-}rank}(P)$.
\end{proof}


\begin{remark}
The bound $mn-1$ improves the trivial bound $mn$ obtained by the vectorization $\mathbf{z}=\mathbf{x}\otimes\mathbf{y}$.
For structured classes such as $x$-symmetric forms, tighter bounds are given by Theorem~\ref{thm:sos_rank_bound}; for example, when $Q$ and $R$ are low-rank, the SOS rank can be much smaller than $mn-1$.
\end{remark}

{

To quantify the worst-case SOS rank across all biquadratic forms, we introduce the following quantity:

\begin{Def}
Let $m, n \ge 2$.  Let $BSR(m, n)$ be the maximum sos rank of $m \times n$ SOS biquadratic forms.
\end{Def}

Then from the discussion in the last section and this section, we have the following theorem.

\begin{Thm}

We have the following conclusions.

1. $BSR(m, n) = BSR(n, m) \le mn-1$ for all $m, n \ge 2$.

2.  $BSR(2, 2) = 3$.

3. $BSR(3, 2) = 4$.

4. {$BSR(m_1, n_1) \le BSR(m_2, n_2)$ if $m_1 \le m_2$ and $n_1 \le n_2$.}

\end{Thm}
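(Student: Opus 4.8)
The plan is to assemble the final theorem from results already proved in the excerpt, treating each of the four conclusions separately. For conclusion~1, I would first argue the symmetry $BSR(m,n)=BSR(n,m)$: swapping the roles of $\vx$ and $\vy$ is a bijection between $m\times n$ and $n\times m$ biquadratic forms that preserves the SOS rank, since an SOS decomposition $P=\sum_p (\vx^\top W_p\vy)^2$ of an $m\times n$ form corresponds bijectively to the decomposition $\sum_p (\vy^\top W_p^\top\vx)^2$ of the transposed $n\times m$ form. The inequality $BSR(m,n)\le mn-1$ is then immediate from Theorem~\ref{thm:universal_bound}, which bounds the SOS rank of \emph{every} SOS $m\times n$ biquadratic form by $mn-1$; taking the maximum over all such forms preserves the bound.

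For conclusions~2 and~3 I would establish matching upper and lower bounds. The upper bounds $BSR(2,2)\le 3$ and $BSR(3,2)\le 4$ are exactly Theorem~1 of \cite{CQX25} and Theorem~2 of \cite{QCX25}, respectively, as recalled in Section~5. The lower bounds follow from Theorem~\ref{thm:m2-lower} applied at $m=2$ and $m=3$: the explicit simple forms $P_{2,2,3}$ and $P_{3,2,4}$ are $2\times 2$ and $3\times 2$ SOS biquadratic forms whose SOS ranks are exactly $3$ and $4$, so the maximum SOS rank over the respective classes is \emph{at least} $3$ and $4$. Combining the two directions pins down $BSR(2,2)=3$ and $BSR(3,2)=4$ exactly.

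For conclusion~4, the monotonicity $BSR(m_1,n_1)\le BSR(m_2,n_2)$ when $m_1\le m_2$ and $n_1\le n_2$, the natural approach is an embedding argument: given an $m_1\times n_1$ SOS biquadratic form $P$ in variables $x_1,\dots,x_{m_1},y_1,\dots,y_{n_1}$, view it as an $m_2\times n_2$ form in the larger variable set that simply does not involve $x_{m_1+1},\dots,x_{m_2}$ or $y_{n_1+1},\dots,y_{n_2}$. I would argue that any SOS decomposition of $P$ as an $m_2\times n_2$ form uses exactly the same number of squares as an optimal decomposition viewed in the smaller variables, so the SOS rank is unchanged under this embedding; hence every SOS rank achieved in the $m_1\times n_1$ class is also achieved in the $m_2\times n_2$ class, giving the inequality of maxima.

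The main obstacle is the embedding step in conclusion~4, specifically verifying that the minimal SOS rank does not change (or at least does not increase) when the form is regarded in more variables. The subtlety is that the enlarged ambient space offers more bilinear forms $L_t$ to work with, so one must rule out that this extra freedom could \emph{reduce} the SOS rank below its value in the smaller space; for the inequality as stated this direction is harmless, but to make the argument airtight I would invoke Theorem~6.1 and a Newton-polytope support argument as in the proof of that theorem—namely that in any SOS decomposition of $P$ each summand $L_t$ has support contained in $\tfrac12\mathrm{New}(P)$, which forces every $L_t$ to involve only the original variables $x_1,\dots,x_{m_1},y_1,\dots,y_{n_1}$. This confines the optimization to the smaller matrix block and shows the SOS rank is genuinely invariant under the embedding, completing the monotonicity.
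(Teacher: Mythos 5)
Your proposal is correct and matches the paper's approach: the paper states this theorem with no explicit proof (only ``from the discussion in the last section and this section''), and the ingredients you assemble---the $mn-1$ bound of Theorem~\ref{thm:universal_bound} plus the $\vx$--$\vy$ swap for conclusion~1, and the upper bounds from \cite{CQX25} and \cite{QCX25} paired with the lower-bound forms $P_{2,2,3}$ and $P_{3,2,4}$ from Theorem~\ref{thm:m2-lower} for conclusions~2 and~3---are exactly the ones the paper relies on. Your treatment of conclusion~4, embedding an $m_1\times n_1$ form into the larger variable set and using the Newton-polytope support argument (equivalently, the vanishing of the diagonal coefficients $\sum_t (c_{ab}^{(t)})^2$ for the extra variables) to show that every summand of any SOS decomposition must avoid the new variables, fills in a detail the paper leaves implicit; it is the same device the paper itself deploys in Theorems~\ref{thm:m2-lower}, \ref{thm:3x3-tight} and \ref{t6.1}, so it is the natural completion rather than a genuinely different route.
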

}

{ \begin{Cor} \label{cor:BRS33-updated}
    For $m=n=3$, we have
    \[
    6 \; \le \; \mathrm{BSR}(3,3) \; \le \; 8,
    \]
    where the lower bound follows from Theorem~\ref{thm:3x3-tight}.
\end{Cor}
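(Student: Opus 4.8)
The plan is to establish Corollary~\ref{cor:BRS33-updated} by combining the two bounds already available in the paper. For the upper bound $\mathrm{BSR}(3,3) \le 8$, I would invoke Theorem~\ref{thm:universal_bound} directly with $m=n=3$: every $3\times 3$ SOS biquadratic form satisfies $\operatorname{SOS\text{-}rank}(P) \le mn-1 = 9-1 = 8$. Since $\mathrm{BSR}(3,3)$ is defined as the maximum SOS rank over all $3\times 3$ SOS biquadratic forms, and each such form has SOS rank at most $8$, the maximum is likewise at most $8$.

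For the lower bound $\mathrm{BSR}(3,3) \ge 6$, I would exhibit a single $3\times 3$ SOS biquadratic form whose SOS rank is exactly $6$, which forces the maximum to be at least $6$. This is precisely what Theorem~\ref{thm:3x3-tight} supplies: the form $P' = P_{3,3,6}$ is SOS (it is a sum of six squares, hence certainly SOS) and satisfies $\operatorname{sos}(P') = 6$. Since $\mathrm{BSR}(3,3)$ is a maximum over all $3\times 3$ SOS forms, it is at least the SOS rank of this particular witness, giving $\mathrm{BSR}(3,3) \ge 6$.

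Chaining the two inequalities yields $6 \le \mathrm{BSR}(3,3) \le 8$, which is exactly the claim. I anticipate no genuine obstacle here, since the corollary is purely a bookkeeping consequence of two theorems proved earlier; the only point requiring a moment of care is confirming that the witness form $P'$ of Theorem~\ref{thm:3x3-tight} indeed qualifies as an SOS biquadratic form so that it is counted in the maximum defining $\mathrm{BSR}(3,3)$ — but this is immediate, since the explicit six-square decomposition exhibited in that theorem's proof certifies $P'$ as SOS.
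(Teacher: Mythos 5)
Your proposal is correct and follows exactly the route the paper intends: the upper bound $\mathrm{BSR}(3,3)\le 9-1=8$ comes from Theorem~\ref{thm:universal_bound} (equivalently, part~1 of the theorem on $\mathrm{BSR}$), and the lower bound comes from the witness $P'=P_{3,3,6}$ of Theorem~\ref{thm:3x3-tight}, whose explicit six-square decomposition certifies it as SOS with $\operatorname{sos}(P')=6$. The paper leaves this corollary without a written proof precisely because it is this immediate combination of the two cited theorems, so there is nothing to add or correct.
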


\begin{remark}
    Determining the exact value of $\operatorname{BRS}(3,3)$¡ªwhether it equals $6$, $7$ or $8$, remains an open problem.
\end{remark}}
	
	\section{Concluding Remarks}
	\label{sec:conclusion}
	
	In this paper, we have studied the sum-of-squares property for partially symmetric biquadratic forms. Our main results can be summarized as follows:
	
	\begin{itemize}
		\item We established necessary and sufficient conditions for positive semi-definiteness of monic $x$-symmetric biquadratic forms (Theorem~\ref{thm:psd_xsym}).
		\item We proved that every PSD $x$-symmetric biquadratic form is SOS (Theorem~\ref{thm:sos_xsym}), extending the known result for fully symmetric forms \cite{XCQ25} to the partially symmetric setting.
		\item We derived an explicit upper bound on the SOS rank for such forms, expressed in terms of the ranks of two associated matrices (Theorem~\ref{thm:sos_rank_bound}).
		\item We provided an efficient computational procedure for constructing SOS decompositions, exploiting the Kronecker-product structure to reduce the cost from $O(m^3n^3)$ to $O(n^3+mn^2)$ (Section~\ref{sec:computation}).
		\item We demonstrated the tightness of the known SOS rank bounds for $2 \times 2$ and $3 \times 2$ biquadratic forms by exhibiting {an $m \times 2$ form that requires exactly $m+1$ squares (Theorem~\ref{thm:m2-lower}).  This also establishes a general lower bound $m+1$ for the sos rank of a $m \times 2$ PSD biquadratic form.}
        \item {We present an $3 \times 3$ SOS biquadratic form, which can be expressed as the sum of six squares, but not the sum  of five squares (Theorem~\ref{thm:3x3-tight}).}
         \item We proved {an improved} universal upper bound $\operatorname{SOS\text{-}rank}(P) \le mn-1$ for any SOS biquadratic form.
	\end{itemize}
	
	These results completely resolve the SOS question for the class of partially symmetric biquadratic forms, showing that partial symmetry either in the $\vx$ or $\vy$ variables is sufficient to guarantee an SOS decomposition for all PSD forms in the class. 
	
	\medskip

    \noindent \textbf{Open problems and future work.}
Several natural questions remain:
\begin{enumerate}
    \item {What is the exact value of \(\mathrm{BSR}(3,3)\)? Theorem~\ref{thm:3x3-tight} shows \(\mathrm{BSR}(3,3)\ge 6\), while Corollary~\ref{cor:BRS33-updated} gives the upper bound \(8\). Closing the gap \(6\le\mathrm{BSR}(3,3)\le 8\) is a natural next step.}

    \item What is the maximal possible SOS rank for an \(m \times n\) PSD \(x\)-symmetric biquadratic form?
    Theorem~\ref{thm:sos_rank_bound} gives an upper bound \(\operatorname{rank}(R)+(m-1)\operatorname{rank}(Q)\), which can be as large as \(mn\).
    However, the universal bound in Theorem~\ref{thm:universal_bound} shows that \(\operatorname{SOS\text{-}rank}(P)\le mn-1\) for any SOS biquadratic form.
    It remains open whether \(mn-1\) can be attained by an \(x\)-symmetric form, and how the worst-case growth with \(m\) and \(n\) behaves under the \(x\)-symmetry constraint.

    \item Can similar SOS guarantees be established for biquadratic forms with other types of symmetry, such as block symmetry or cyclic symmetry?
    \item How do these results extend to higher-degree multiquadratic forms (e.g., trilinear forms raised to the fourth power)?
    \item Are there practical applications in optimization or engineering where the efficient SOS construction of Section~\ref{sec:computation} can be deployed at scale?
\end{enumerate}

	We hope that the techniques developed here especially the use of the Kronecker-product representation and the block-diagonalization trick will be useful in tackling these and related questions in the future. 
	
	\bigskip
	
	\noindent\textbf{Acknowledgement}
	This work was partially supported by Research Center for Intelligent Operations Research, The Hong Kong Polytechnic University (4-ZZT8), the National Natural Science Foundation of China (Nos. 12471282 and 12131004),  and Jiangsu Provincial Scientific Research Center of Applied Mathematics (Grant No. BK20233002).
	
	\medskip
	
	\noindent\textbf{Data availability}
	No datasets were generated or analysed during the current study.
	
	\medskip
	
	\noindent\textbf{Conflict of interest} The authors declare no conflict of interest.

\end{document}